\theoremstyle{plain}
\newcommand{\mycmidrule}{\cmidrule[\heavyrulewidth]}
\def\adl@drawiv#1#2#3{%
        \hskip.5\tabcolsep
        \xleaders#3{#2.5\@tempdimb #1{1}#2.5\@tempdimb}%
                #2\z@ plus1fil minus1fil\relax
        \hskip.5\tabcolsep}
\newcommand{\cdashlinelr}[1]{%
  \noalign{\vskip\aboverulesep
           \global\let\@dashdrawstore\adl@draw
           \global\let\adl@draw\adl@drawiv}
  \cdashline{#1}
  \noalign{\global\let\adl@draw\@dashdrawstore
           \vskip\belowrulesep}}
\newtheorem{lemma}{Lemma}[section]
\newtheorem{theorem}[lemma]{Theorem}
\newtheorem{cor}[lemma]{Corollary}
\newtheorem{exam}[lemma]{\normalfont \scshape
 Example}
\newtheorem{defi}[lemma]{Definition}
\newcommand{\R}{\mathbb{R}}
\newcommand{\N}{\mathbb{N}}
\newcommand{\norm}[1]{\left\Vert#1\right\Vert}
\newcommand{\dnormf}[1]{\wr\hspace*{-2.5pt}\wr #1 \wr\hspace*{-2.5pt}\wr}
\newcommand{\abs}[1]{\left\vert#1\right\vert}
\newcommand{\set}[1]{\left\{#1\right\}}
\newcommand{\bfx}{\bm{x}}
\newcommand{\bfzero}{\bm{0}}
\newcommand{\bfone}{\bm{1}}
\newcommand{\bfa}{\bm{a}}
\newcommand{\bfb}{\bm{b}}
\newcommand{\bfe}{\bm{e}}
\newcommand{\bfM}{\bm{M}}
\newcommand{\bfr}{\bm{r}}
\newcommand{\bft}{\bm{t}}
\newcommand{\bfU}{\bm{U}}
\newcommand{\bfu}{\bm{u}}
\newcommand{\bfv}{\bm{v}}
\newcommand{\bfV}{\bm{V}}
\newcommand{\bfX}{\bm{X}}
\newcommand{\bfY}{\bm{Y}}
\newcommand{\bfy}{\bm{y}}
\newcommand{\bfZ}{\bm{Z}}
\newcommand{\bfz}{\bm{z}}
\begin{document}

\title[Generalized Pareto Copulas]{Generalized Pareto Copulas: A Key to Multivariate Extremes}

\author{Michael Falk\and Simone Padoan\and Florian Wisheckel}
\address[1,3]{Institute of Mathematics, University of W\"{u}rzburg, W\"{u}rzburg, Germany}
\email{michael.falk@uni-wuerzburg.de}\email{florian.wisheckel@uni-wuerzburg.de}
\address[2]{Department of Decision Sciences,
Bocconi University of Milan,  Milano, Italy}
\email{simone.padoan@unibocconi.it}

\subjclass[2010]{Primary 62G32; secondary 60G70}%
\keywords{Domain of attraction, multivariate max-stable distribution, copula, extreme value copula, exceedance stable, generalized Pareto copula, multivariate generalized Pareto distribution, exceedance probability, confidence interval}%

% ----------------------------------------------------------------

\begin{abstract}
This paper reviews generalized Pareto copulas (GPC), which turn out to be a key to multivariate extreme value theory.
Any GPC can be represented in an easy analytic way using a particular type of norm on $\R^d$, called $D$-norm. The characteristic property of a GPC is its exceedance stability.

GPC might help to end the debate: What is a \emph{multivariate} generalized Pareto distribution?
We present an easy way how to simulate data from an arbitrary GPC and, thus, from an arbitrary generalized Pareto distribution.

As an application we derive  nonparametric estimates of the probability that a  random vector, which follows a GPC, exceeds a high threshold, together with confidence intervals. A case study  on joint exceedance probabilities for air pollutants completes the paper.

\end{abstract}

\maketitle

% ----------------------------------------------------------------

\section{Introduction and Preliminaries}

Let $\bfX=(X_1,\dots,X_d)$ be a random vector (rv), whose distribution function (df) is in the domain of attraction of a multivariate non degenerate df $G$, denoted by $F\in\mathcal D(G)$, i.e., there exist vectors $\bfa_n>\bfzero\in\R^d$, $\bfb_n\in\R^d$, $n\in\N$, such that
\begin{equation}\label{cond:F in max-domain of attraction}
F^n(\bfa_n\bfx+\bfb_n)\to_{n\to\infty} G(\bfx),\qquad \bfx\in\R^d.
\end{equation}
All operations on vectors $\bfx,\bfy$ such as $\bfx+\bfy$, $\bfx\bfy$ etc. are meant componentwise.

The limit df $G$ is necessarily max-stable, i.e., there exist vectors $\bfa_n>\bfzero\in\R^d$, $\bfb_n\in\R^d$, $n\in\N$, such that
\[
G^n(\bfa_n\bfx+\bfb_n) = G(\bfx),\qquad \bfx\in\R^d.
\]
A characterization of multivariate max-stable df was established by \citet{dehar77} and \citet{vatan85}; for an introduction to multivariate extreme value theory see, e.g., \citet[Chapter 4]{fahure10}.

The univariate margins $G_i$, $1\le i\le d$, of a multivariate max-stable df $G$ belong necessarily to the family of univariate max-stable df, which is a parametric family $\set{G_\alpha:\,\alpha\in\R}$ with $$\begin{array}{ll}
G_\alpha(x)=\left\{
\begin{array}{lll}
\exp\big(-(-x)^\alpha\big), & \quad x\le 0,\\[1ex]
1,                       & \quad x>0,
\end{array}\right. &\qquad \mbox{for }\alpha>0,\\[4ex]
G_\alpha(x)=\left\{
\begin{array}{lll}
0,                         &\quad x\le 0,\\[1ex]
\exp(-x^\alpha),\hspace*{8.3mm} & \quad x>0,
\end{array}\right. &\qquad \mbox{for }\alpha<0,
\end{array}
$$
and
\begin{equation}\label{defn:of univariate EVD}
G_0(x):=\exp(-e^{-x}),\qquad x\in\R,
\end{equation}
being the family of reverse Weibull\index{Distribution!reverse Weibull}, Fr\'{e}chet\index{Distribution!Fr\'{e}chet} and  Gumbel distributions\index{Distribution!Gumbel}. Note that $G_1(x)=\exp(x)$, $x\le 0$, is the standard \emph{negative exponential} df. We refer, e.g., to \citet[Section 2.3]{gal87} or \citet[Chapter 1]{resn87}.

By Sklar's theorem (\citet{sklar59, sklar96}), there exists a rv $\bfU=(U_1,\dots,U_d)$ with the property that each component $U_i$ follows the uniform distribution on $(0,1)$, such that
\[
\bfX=_D \left(F_1^{-1}(U_1),\dots,F_d^{-1}(U_d)\right),
\]
where $F_i$ is the df of $X_i$ and $F_i^{-1}(u)=\inf\set{t\in\R:\,F_i(t)\ge u}$, $u\in (0,1)$, is the common generalized inverse or quantile function of $F_i$, $1\le i\le d$. By $=_D$ we denote equality in distribution.

The rv $\bfU$, therefore, follows a \emph{copula}, say $C_F$. If $F$ is continuous, then the copula $C_F$ is uniquely determined and given by $C_F(\bfu)=F\left(F_1^{-1}(u_1),\dots,F_d^{-1}(u_d)\right)$, $\bfu=(u_1,\dots,u_d)\in(0,1)^d$.

\citet{deheu84} and \citet{gal87} showed that
$F\in\mathcal D(G)$ iff this is true for each univariate margin $F_i$ \emph{and} for the copula $C_F$. Precisely, they established the following result.

\begin{theorem}[\citet{deheu84}, \citet{gal87}]\label{theo:deheuvels and galambos}
The df $F$ satisfies $F\in\mathcal D(G)$ iff this is true for the univariate margins of $F$ together with the convergence of the copulas:
\begin{equation}\label{cond:original_condition_on_copula}
C_F^n\left(\bfu^{1/n}\right)\to_{n\to\infty}C_G(\bfu)=G\left(\left(G_i^{-1}(u_i)\right)_{i=1}^d\right),
\end{equation}
$\bfu=(u_1,\dots,u_d)\in(0,1)^d$, where $G_i$ denotes the $i$-th margin of $G$, $1\le i\le d$.
\end{theorem}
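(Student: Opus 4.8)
The plan is to use Sklar's theorem to peel the marginal behaviour off $F^n(\bfa_n\bfx+\bfb_n)$ and thereby reduce the claim to a statement about the copula $C_F$ alone. First I would fix a copula $C_F$ with $F(\bfx)=C_F\bigl(F_1(x_1),\dots,F_d(x_d)\bigr)$ (unique if $F$ is continuous), so that, writing $a_{n,i},b_{n,i}$ for the components of $\bfa_n,\bfb_n$,
\[
F^n(\bfa_n\bfx+\bfb_n)=C_F^n\bigl(v_{n,1}(\bfx),\dots,v_{n,d}(\bfx)\bigr),\qquad v_{n,i}(\bfx):=F_i(a_{n,i}x_i+b_{n,i}).
\]
For $\bfu\in(0,1)^d$ I would then put $x_i:=G_i^{-1}(u_i)$; since $G$ is continuous this gives $C_G(\bfu)=G\bigl((G_i^{-1}(u_i))_{i=1}^d\bigr)=G(\bfx)$. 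In this way both \eqref{cond:F in max-domain of attraction} and \eqref{cond:original_condition_on_copula} become assertions about the limit of $C_F^n$ evaluated at a sequence of points of $[0,1]^d$ tending to $\bfone$ — along $\bfv_n(\bfx)$ in the first case, along $\bfu^{1/n}=(u_1^{1/n},\dots,u_d^{1/n})$ in the second — and the theorem amounts to saying that only the first-order rate of this approach matters.

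The bridge between the two sequences is the elementary univariate equivalence that, for a df $H$ and norming constants $c_n>0,\,d_n$, one has $H^n(c_nx+d_n)\to\exp(-\tau)$ with $\tau\in(0,\infty)$ iff $n\bigl(1-H(c_nx+d_n)\bigr)\to\tau$. Applying this to each margin $F_i$ — which lies in $\mathcal D(G_i)$ with norming constants $a_{n,i},b_{n,i}$ whenever $F\in\mathcal D(G)$, by the standard projection argument of letting the other coordinates tend to $+\infty$ in \eqref{cond:F in max-domain of attraction} — and to the trivial identity $n\bigl(1-u_i^{1/n}\bigr)\to-\log u_i$, I would deduce for $\bfu\in(0,1)^d$ and $x_i=G_i^{-1}(u_i)$ that
\[
n\,\bigl|v_{n,i}(\bfx)-u_i^{1/n}\bigr|\le\bigl|n(1-v_{n,i}(\bfx))+\log u_i\bigr|+\bigl|n(1-u_i^{1/n})+\log u_i\bigr|\to_{n\to\infty}0 .
\]
Combining this with the $1$-Lipschitz property of an arbitrary copula, $\abs{C(\bfv)-C(\bfw)}\le\sum_{i=1}^d\abs{v_i-w_i}$ (a consequence of its uniform margins), and with the bound $\abs{a^n-b^n}\le n\abs{a-b}$ valid for $a,b\in[0,1]$, then yields
\[
\Bigl|\,C_F^n\bigl(\bfv_n(\bfx)\bigr)-C_F^n\bigl(\bfu^{1/n}\bigr)\Bigr|\le n\sum_{i=1}^d\bigl|v_{n,i}(\bfx)-u_i^{1/n}\bigr|\to_{n\to\infty}0 ,
\]
so that $F^n(\bfa_n\bfx+\bfb_n)$ and $C_F^n(\bfu^{1/n})$ always have the same limiting behaviour. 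Both implications follow: if $F\in\mathcal D(G)$, the margins converge by the projection argument and, by the last display, $C_F^n(\bfu^{1/n})\to G(\bfx)=C_G(\bfu)$; conversely, if the margins of $F$ lie in the respective $\mathcal D(G_i)$ and \eqref{cond:original_condition_on_copula} holds, then, taking $\bfa_n,\bfb_n$ assembled from the marginal norming sequences, $F^n(\bfa_n\bfx+\bfb_n)\to C_G(\bfu)=G(\bfx)$ whenever $G_i(x_i)\in(0,1)$ for all $i$.

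The step I expect to be the main obstacle is exactly this substitution of $\bfv_n(\bfx)$ by $\bfu^{1/n}$ inside $C_F^n$: it is the point where one genuinely needs that a copula is $1$-Lipschitz, and it is the reason the domain-of-attraction criterion becomes transparent only after passing to copulas rather than working with $F$ directly. The remainder is bookkeeping that I would dispatch quickly. In the reverse direction one still has to reach every $\bfx\in\R^d$: if some $G_i(x_i)=0$ then $G(\bfx)=0$ and $F^n(\bfa_n\bfx+\bfb_n)\le F_i^n(a_{n,i}x_i+b_{n,i})\to0$, while if some $G_i(x_i)=1$ one lets the corresponding $u_i\uparrow1$ and uses monotonicity together with the continuity of $G$ and of $C_F$ to pass to the limit. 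Finally, if $F$ is not continuous one simply fixes any copula $C_F$ provided by Sklar's theorem — all the inequalities above hold for an arbitrary copula — so nothing in the argument changes.
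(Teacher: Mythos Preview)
The paper does not supply its own proof of this theorem: it is stated as a result of \citet{deheu84} and \citet{gal87} and then used as a black box, so there is no in-paper argument to compare against. What the paper does add after the statement is only the observation that condition \eqref{cond:original_condition_on_copula} is equivalent to $C_F^n(\bfone+\bfy/n)\to G^*(\bfy)$, i.e., that the copula is in the max-domain of attraction of a standard max-stable df.

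Your proposed argument is sound and is in fact the standard route to this result. The two ingredients you single out --- the Lipschitz bound $\abs{C(\bfv)-C(\bfw)}\le\norm{\bfv-\bfw}_1$ for any copula, combined with $\abs{a^n-b^n}\le n\abs{a-b}$ on $[0,1]$ --- give exactly the control
\[
\bigl|C_F^n(\bfv_n(\bfx))-C_F^n(\bfu^{1/n})\bigr|\le n\sum_{i=1}^d\bigl|v_{n,i}(\bfx)-u_i^{1/n}\bigr|,
\]
and the univariate equivalence $H^n(c_n x+d_n)\to e^{-\tau}\iff n(1-H(c_n x+d_n))\to\tau$ then shows the right-hand side vanishes whenever $F_i\in\mathcal D(G_i)$ and $x_i=G_i^{-1}(u_i)$. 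Your handling of the boundary cases $G_i(x_i)\in\{0,1\}$ in the converse direction is also correct; the monotonicity/continuity argument for $G_i(x_i)=1$ is the usual one. The only cosmetic point: in your displayed triangle inequality you could make explicit that $n(1-v_{n,i}(\bfx))\to-\log u_i$ follows from $v_{n,i}(\bfx)^n\to u_i\in(0,1)$ via $n\log v_{n,i}(\bfx)\to\log u_i$ and $\log t\sim t-1$ as $t\to1$, rather than invoking the abstract equivalence; this keeps the argument entirely self-contained.
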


Let $\bfU^{(1)},\bfU^{(2)},\dots$ be independent copies of the rv $\bfU$, which follows the copula $C_F$. Then the copula $C_{\bfM_n}$ of
\[
\bfM_n:= \max_{1\le i\le n}\bfU^{(i)}
\]
is $C_F^n\left(\bfu^{1/n}\right)$, where the maximum is also taken componentwise. The df of $\bfM_n$ is $C_F^n$ and, thus, we have
\[
C_F^n\left(\bfu^{1/n} \right) = C_{\bfM_n}(\bfu) = C_{C_F^n}(\bfu),\qquad \bfu\in[0,1]^d.
\]
Therefore, condition \eqref{cond:original_condition_on_copula} actually means pointwise convergence of the copulas
\[
C_{\bfM_n}(\bfu)\to_{n\to\infty} C_G(\bfu),
\]
where $C_G(\bfu)=G\left(\left(G_i^{-1}(u_i)\right)_{i=1}^d\right)$, $\bfu\in(0,1)^d$, is the copula of $G$. This is an \emph{extreme value copula}. Note that each margin $G_i$ of $G$ is continuous, which is equivalent with the continuity of $G$ (see, e.g., \citet[Lemma 2.2.6]{reiss89}).

Elementary arguments imply that condition \eqref{cond:original_condition_on_copula} is equivalent with the condition
\begin{equation}\label{cond:copula in domain of attraction}
C_F^n\left(\bfone+\frac{\bfy}n\right)\to_{n\to\infty} G^*(\bfy):=C_G(\exp(\bfy)),\qquad \bfy\le\bfzero\in\R^d,
\end{equation}
where $\bfone=(1,\dots,1)\in\R^d$ and $G^*(\bfy)$, $\bfy\le\bfzero\in\R^d$, defines a max-stable df with standard negative exponential margins $G^*_i(y)=\exp(y)$, $y\le 0$, $1\le i\le d$. Such a max-stable df will be called a \emph{standard} one, abbreviated by SMS (standard max-stable).

While the condition on the univariate margins $F_i$ in Theorem \ref{theo:deheuvels and galambos} addresses \emph{univariate} extreme value theory,  condition \eqref{cond:original_condition_on_copula} on the copula $C_F$ means by the equivalent condition \eqref{cond:copula in domain of attraction}  that the copula $C_F$ is in the domain of attraction of a multivariate SMS df:
\[
C_F^n\left(\bfone+\frac{\bfy}n\right)=P(n(\bfM_n-\bfone)\le \bfy)\to_{n\to\infty}G^*(\bfy),\qquad \bfy\le\bfzero\in\R^d.
\]

Let $C$ be an arbitrary copula on $\R^d$. Then condition \eqref{cond:F in max-domain of attraction} becomes
\begin{align*}
C\in\mathcal D(G)
\iff C^n(\bfa_n\bfx+\bfb_n)\to_{n\to \infty} G(\bfx),\qquad  \bfx\in\R^d,
\end{align*}
where the norming constants $\bfa_n,\bfb_n$ are determined by the univariate margins $C_i$ of $C$, i.e., the uniform distribution on $(0,1)$:
With $a_n=1/n$, $b_n=1$ we obtain for large $n$
\begin{align*}
C_i(a_nx+b_n)^n&=\left(1+\frac xn \right)^n
\to_{n\to\infty}\exp(x),\qquad x\le 0.
\end{align*}

We therefore obtain the conclusion: If a copula $C$ satisfies $C\in\mathcal D(G)$, then the limiting df $G$ has necessarily standard negative exponential margins:
\[
G_i(x)=\exp(x),\qquad x\le 0,\;1\le i\le d,
\]
i.e., the limiting df $G$ is necessarily a SMS df.

As a consequence we obtain that \emph{multivariate} extreme value theory actually means extreme value theory for \emph{copulas}.

This paper is organized as follows. In the next section we introduce $D$-norms, which turn out to be a common thread in multivariate extreme value theory. Using the concept of $D$-norms, we introduce in Section \ref{sec:generalized Pareto copulas} generalized Pareto copulas (GPC). The characteristic property of a GPC is its excursion or exceedance stability, established in Theorem \ref{propo:characteristic_property_of_GPC}. The family of GPC together with the well-known set of univariate generalized Pareto distributions (GPD) enables the definition of multivariate GPD in Section \ref{sec:multivariate generalized Pareto distributions}. As the set of univariate GPD equals the set of univariate non degenerate exceedance stable distributions, its extension to higher dimensions via a GPC and GPD margins is an obvious idea. $\delta$-neighborhoods of a GPC are introduced in Section \ref{sec:delta-neighborhoods of GPC}. The normal copula is a prominent example. Among others we show how to simulate data, which follow a copula from such a $\delta$-neighborhood. In Section \ref{sec:estimaton of exceedance probability} we show how our findings on GPC can be used to estimate exceedance probabilities above high thresholds, including confidence intervals. A case study in Section \ref{sec:case study} on joint exceedance probabilities for air pollutants such as  ozone, nitrogen dioxide, nitrogen oxide, sulphur dioxide and particulate matter, completes the paper.

\section{Introducing D-Norms}\label{sec:D-norms}
A crucial characterization of SMS df due to \citet{balr77}, \citet{dehar77}, \citet{pick81} and \citet{vatan85} can be formulated as follows; see \citet[Section 4.4]{fahure10}.

\begin{theorem}[\citet{balr77}, \citet{dehar77}, \citet{pick81}, \citet{vatan85}]
  A df $G$ on $\R^d$ is an SMS df iff there exists a norm $\norm\cdot$ on $\R^d$ such that
\begin{equation}\label{eqn:characterization of sms df}
G(\bfx)=\exp(-\norm{\bfx}),\qquad \bfx\le \bfzero\in\R^d.
\end{equation}
\end{theorem}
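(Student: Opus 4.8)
The plan is to prove the two implications separately; the converse is short, while the forward direction carries all the weight.

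For the converse, assume $G(\bfx)=\exp(-\norm{\bfx})$ for $\bfx\le\bfzero$ with some norm $\norm\cdot$ on $\R^d$ (which necessarily satisfies $\norm{\bfe_i}=1$, as one reads off from the $i$-th margin: $G$ at $x\bfe_i$ equals $\exp(-\abs{x}\,\norm{\bfe_i})$, and for $G$ to be standard this must be $\exp(x)$, $x\le 0$). Positive homogeneity of $\norm\cdot$ then gives, for every $n\in\N$,
\[
G^n\!\left(\tfrac1n\bfx\right)=\exp\!\left(-n\norm{\tfrac1n\bfx}\right)=\exp(-\norm{\bfx})=G(\bfx),\qquad \bfx\le\bfzero,
\]
and, since $G$ is supported on the negative orthant, the same identity extends to all $\bfx\in\R^d$ with $\bfa_n=\tfrac1n\bfone$ and $\bfb_n=\bfzero$. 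Together with the standard negative exponential margins just noted, this says precisely that $G$ is an SMS df. (If one is unwilling to take ``$G$ is a df'' for granted here, one instead exhibits a rv with df $G$ directly, by writing $\norm\cdot$ as a $D$-norm with a generator $\bfZ\ge\bfzero$, $E(Z_i)=1$, and reading off $G$ from the associated Poisson point process; but under the stated hypotheses this is unnecessary.)

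For the forward implication, let $G$ be SMS and put $L(\bfx):=-\log G(\bfx)\ge0$ on $\bfx\le\bfzero$. Matching the standard negative exponential margins in the max-stability relation forces $\bfa_n=\tfrac1n\bfone$, $\bfb_n=\bfzero$, hence $G^n(\bfx/n)=G(\bfx)$, i.e.\ $L(\bfx/n)=L(\bfx)/n$; iterating this and using that $L$ is coordinatewise non-increasing (because $G$ is non-decreasing) upgrades it, through the rationals and a squeezing argument, to full positive homogeneity $L(t\bfx)=tL(\bfx)$ for all $t>0$. Setting $\norm\bfx:=L(-\abs\bfx)$ on $\R^d$, absolute homogeneity is then clear, positive definiteness follows since $L(-\abs\bfx)=0$ forces $G(-\abs\bfx)=1$ and hence $\bfx=\bfzero$, and $\norm{\bfe_i}=-\log G_i(-1)=1$; the only norm axiom left is the triangle inequality, which, using $\abs{x_i+y_i}\le\abs{x_i}+\abs{y_i}$ and the monotonicity of $L$, reduces to the subadditivity of $L$ on the negative orthant, $L(\bfa+\bfb)\le L(\bfa)+L(\bfb)$ for $\bfa,\bfb\le\bfzero$.

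That subadditivity (equivalently, convexity) of $L$ is the one genuinely hard point, and it is where I would lean on the structure theory behind the cited papers. Transform to unit Fr\'echet margins via $z_i=-1/x_i$: $\widetilde G(\bfz):=G(-1/z_1,\dots,-1/z_d)$ is max-stable with $\widetilde G_i(z)=\exp(-1/z)$ and $\widetilde G^n(n\bfz)=\widetilde G(\bfz)$, so it is max-infinitely divisible and therefore carries an exponent measure $\nu$ on $[0,\infty]^d\setminus\{\bfzero\}$ with $-\log\widetilde G(\bfz)=\nu(\{\bfy:\ \bfy\not\le\bfz\})$; max-stability pins down the homogeneity $\nu(t\,A)=t^{-1}\nu(A)$. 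A polar decomposition of $\nu$ — whose radial part is forced to be $r^{-2}\,\d r$ by this scaling — produces a finite spectral measure $H$ on the unit simplex $S_d=\{\bfw\ge\bfzero:\ w_1+\dots+w_d=1\}$, and substituting back yields
\[
-\log G(\bfx)=\int_{S_d}\max_{1\le i\le d}\bigl(w_i\abs{x_i}\bigr)\,\d H(\bfw),\qquad \int_{S_d}w_i\,\d H(\bfw)=1\ \ (1\le i\le d).
\]
The right-hand side is visibly a norm in $\bfx$ — an average of the seminorms $\bfx\mapsto\max_i(w_i\abs{x_i})$, hence subadditive and absolutely homogeneous, and positive definite because the barycentre conditions $\int_{S_d}w_i\,\d H(\bfw)=1$ prevent any coordinate from being annihilated — which supplies the missing subadditivity of $L$ and identifies $\norm\bfx=L(-\abs\bfx)$ with it, so $G(\bfx)=\exp(-\norm{\bfx})$ on $\bfx\le\bfzero$. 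The step I cannot circumvent by elementary means is exactly this existence of the exponent measure together with its homogeneity and polar decomposition, i.e.\ the spectral representation of multivariate max-stable laws; in a review of this kind that is precisely what one quotes from \citet{balr77,dehar77,pick81,vatan85}, after which everything surrounding it, in both directions, is bookkeeping with homogeneity and monotonicity.
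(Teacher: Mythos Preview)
The paper does not prove this theorem; it is stated as a classical result, attributed to \citet{balr77}, \citet{dehar77}, \citet{pick81} and \citet{vatan85}, with a pointer to \citet[Section 4.4]{fahure10} for details. So there is no in-paper proof to compare your proposal against.

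Your sketch is the standard route and is correct in substance. Max-stability with standard negative exponential margins forces $\bfa_n=\tfrac1n\bfone$, $\bfb_n=\bfzero$, hence $L=-\log G$ is positively homogeneous on $(-\infty,0]^d$; positive definiteness and $\norm{\bfe_i}=1$ are immediate; and the only nontrivial axiom, subadditivity, is exactly what the spectral representation delivers via the integral formula $\int_{S_d}\max_i(w_i\abs{x_i})\,\d H(\bfw)$. You are right that this last step is the genuine content of the cited papers and cannot be had by elementary manipulations alone. One small presentational wrinkle: in your converse paragraph the parenthetical ``which necessarily satisfies $\norm{\bfe_i}=1$, as one reads off from the $i$-th margin \dots for $G$ to be standard'' is circular as written, since ``standard'' is what you are proving. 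The cleaner phrasing is to observe that the theorem, as stated, is slightly informal on this point --- the paper clarifies immediately afterward that only $D$-norms (which automatically have $\norm{\bfe_i}=1$) can occur --- and then your homogeneity argument gives max-stability, with the margins read off as $\exp(-\abs{x}\,\norm{\bfe_i})$.
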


Elementary arguments imply the following consequence.

\begin{cor}\label{coro:characterization of domain of attraction of copula}
A copula $C$ satisfies $C\in\mathcal D(G)$ iff there exists a norm $\norm\cdot$ on $\R^d$ such that
\begin{equation}\label{eqn:expansion of copula in domain of attraction}
C(\bfu) = 1-\norm{\bfone-\bfu}+ o(\norm{\bfone-\bfu})
\end{equation}
as $\bfu\to\bfone\in\R^d$, uniformly for $\bfu\in[0,1]^d$.
\end{cor}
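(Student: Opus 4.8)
The plan is to strip the statement down to the convergence of the copula powers and then to characterise that convergence by the first‑order expansion of $C$ at $\bfone$. Specialising the definition of the domain of attraction to a copula, where — as noted above — the norming constants are forced to be $\bfa_n=1/n$, $\bfb_n=\bfone$, and using that the attractor $G$ is then necessarily a SMS df, one has $C\in\mathcal D(G)$ precisely when
\[
C^n\!\left(\bfone+\tfrac{\bfy}{n}\right)\to_{n\to\infty}G^*(\bfy),\qquad\bfy\le\bfzero\in\R^d,
\]
for some SMS df $G^*$. By \eqref{eqn:characterization of sms df}, such $G^*$ are exactly those with $G^*(\bfy)=\exp(-\norm{\bfy})$, $\bfy\le\bfzero$, for a norm $\norm{\cdot}$ on $\R^d$. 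Hence it suffices to prove, for a fixed norm, that
\[
C^n\!\left(\bfone+\tfrac{\bfy}{n}\right)\to\exp(-\norm{\bfy})\ \ (\bfy\le\bfzero)
\qquad\Longleftrightarrow\qquad
C(\bfu)=1-\norm{\bfone-\bfu}+o(\norm{\bfone-\bfu})\ \ (\bfu\to\bfone\text{ in }[0,1]^d),
\]
and that in the forward implication $\exp(-\norm{\cdot})$ is a genuine non‑degenerate df, so that indeed $C\in\mathcal D(G)$ with $G=G^*$.

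For ``expansion $\Rightarrow$ convergence'' I would simply feed $\bfu_n:=\bfone+\bfy/n$ (admissible for large $n$, and $\bfu_n\to\bfone$) into the expansion: since $\norm{\bfone-\bfu_n}=\norm{\bfy}/n$ this gives $C(\bfu_n)=1-\norm{\bfy}/n+o(1/n)$, whence $C^n(\bfu_n)=\exp\!\big(n\log(1-\norm{\bfy}/n+o(1/n))\big)\to\exp(-\norm{\bfy})$. Moreover $\bfy\mapsto C^n(\bfone+\bfy/n)$ is the df of $n(\bfM_n-\bfone)$, for $\bfM_n$ the componentwise maximum of $n$ independent copies of a vector with distribution $C$, and its univariate margins converge to the proper df $\exp(y)$, $y\le0$; so the family is tight, the pointwise limit $\exp(-\norm{\cdot})$ is a bona fide non‑degenerate df — indeed an SMS df — and $C\in\mathcal D(G)$ with $G^*=\exp(-\norm{\cdot})$.

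The substantive direction is ``convergence $\Rightarrow$ expansion''. Put $g:=1-C$. Taking logarithms in $C^n(\bfone+\bfy/n)\to\exp(-\norm{\bfy})\in(0,1]$, and using $g(\bfone+\bfy/n)\to0$ together with $\log(1-t)=-t+O(t^2)$, one first obtains the \emph{pointwise} statement $n\,g(\bfone+\bfy/n)\to\norm{\bfy}$ for every $\bfy\le\bfzero$. It then suffices to show $g(\bfu_k)/\norm{\bfone-\bfu_k}\to1$ along every sequence $\bfu_k\to\bfone$ in $[0,1]^d\setminus\{\bfone\}$. Here I would use two elementary properties of a copula: the Lipschitz bound $\abs{C(\bfu)-C(\bfv)}\le\norm{\bfu-\bfv}_1$, and $\norm{\bfone-\bfu}_\infty\le g(\bfu)\le\norm{\bfone-\bfu}_1$ (so $g(\bfu)\asymp\norm{\bfone-\bfu}$, by equivalence of norms). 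Set $m_k:=\lceil1/\norm{\bfone-\bfu_k}_\infty\rceil\to\infty$ and $\bfy_k:=m_k(\bfu_k-\bfone)$, so that $\bfone+\bfy_k/m_k=\bfu_k$ and $\norm{\bfy_k}_\infty\in[1,2)$ eventually; the $\bfy_k$ thus lie in the compact box $[-2,0]^d$ and stay bounded away from $\bfzero$, so along a subsequence $\bfy_k\to\bfy_*\ne\bfzero$. The Lipschitz bound yields $\abs{m_k\,g(\bfu_k)-m_k\,g(\bfone+\bfy_*/m_k)}\le\norm{\bfy_k-\bfy_*}_1\to0$, while the pointwise statement applied to the \emph{fixed} vector $\bfy_*$ yields $m_k\,g(\bfone+\bfy_*/m_k)\to\norm{\bfy_*}$; hence $m_k\,g(\bfu_k)\to\norm{\bfy_*}$. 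Since $m_k\norm{\bfone-\bfu_k}=\norm{\bfy_k}\to\norm{\bfy_*}>0$ by homogeneity of the norm, $g(\bfu_k)/\norm{\bfone-\bfu_k}=m_k\,g(\bfu_k)/\norm{\bfy_k}\to1$ along that subsequence, and the subsequence‑of‑a‑subsequence principle promotes this to the full sequence. Finally, a failure of the \emph{uniform} $o$‑estimate would produce a sequence $\bfu_k\to\bfone$ contradicting exactly this limit, so the uniform expansion follows.

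The crux — and essentially the only non‑routine point — is this last direction: the copula powers deliver $n\,g(\bfone+\bfy/n)\to\norm{\bfy}$ merely \emph{pointwise} in $\bfy$, whereas the expansion demands control of $g(\bfu)/\norm{\bfone-\bfu}$ along \emph{arbitrary} approaches $\bfu\to\bfone$ and with a uniform error. The device that bridges the gap is the rescaling $\bfu\mapsto\big(m(\bfu),\,m(\bfu)(\bfu-\bfone)\big)$ into a fixed compact box, combined with the $1$‑Lipschitz property of copulas, which lets one freeze the direction vector and thereby sidesteps any appeal to a Pólya‑type uniform‑convergence theorem; the remaining manipulations are bookkeeping.
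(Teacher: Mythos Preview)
Your argument is correct. The paper itself omits the proof entirely, stating only that ``elementary arguments imply the following consequence'' of the characterisation \eqref{eqn:characterization of sms df}; your write-up supplies precisely such arguments. The forward direction is the truly elementary step, and your treatment of the converse via the rescaling $\bfu\mapsto(m_k,\,m_k(\bfu-\bfone))$ into a compact box together with the $1$-Lipschitz property of copulas is a clean way to upgrade the pointwise limit $n\,g(\bfone+\bfy/n)\to\norm{\bfy}$ to the required uniform first-order expansion. One small point worth making explicit in the ``expansion $\Rightarrow$ convergence'' half: not every norm $\norm{\cdot}$ makes $\exp(-\norm{\bfy})$ a distribution function, so you are right to justify this a posteriori via tightness of $n(\bfM_n-\bfone)$ (from the marginal convergence) rather than assume it; the continuity of the pointwise limit then forces it to coincide with the genuine limiting df, and max-stability follows.
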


Those norms, which can appear in the preceding result, can be characterized. Any norm $\norm\cdot$ in equation \eqref{eqn:characterization of sms df} or \eqref{eqn:expansion of copula in domain of attraction} is necessarily of the following kind: There exists a rv $\bfZ=(Z_1,\dots,Z_d)$, whose components satisfy
\[
Z_i\ge 0,\quad E(Z_i)=1,\qquad 1\le i\le d,
\]
with
\[
\norm{\bfx}=E\left(\max_{1\le i\le d}\left(\abs{x_i}Z_i\right) \right)=: \norm{\bfx}_D,
\]
$\bfx=(x_1,\dots,x_d)\in\R^d$.

Such a norm $\norm\cdot_D$ is called \emph{$D$-norm}, with \emph{generator} $\bfZ$. The additional index $D$ means \emph{dependence}. $D$-norms were first mentioned in \citet[equation (4.25)]{fahure04} and more elaborated in \citet[Section 4.4]{fahure10}.
Examples are:
\begin{itemize}
\item $\norm{\bfx}_\infty=\max_{1\le i\le d}\abs{x_i}$, with generator $\bfZ=(1,\dots,1)\in\R^d$,
\item $\norm{\bfx}_1=\sum_{i=1}^d\abs{x_i}$, with generator $\bfZ$  being a random permutation of the vector $(d,0,\dots,0)\in\R^d$,
\item each logistic norm $\norm{\bfx}_p=\left(\sum_{i=1}^d\abs{x_i}^p\right)^{1/p}$, $p\in(1,\infty)$, with generator $\bfZ=(Z_1,\dots,Z_d)=(Y_1,\dots,Y_d)/\Gamma(1-1/p)$, $Y_1,\dots,Y_d$ iid Fr\'{e}chet-distributed rv with parameter $p$, where $\Gamma$ denotes the usual gamma function.
\item Let the rv $\bfX=(X_1,\dots,X_d)$ follow a multivariate normal distribution with mean vector zero, i.e., $E(X_i)=0$, $1\le i\le d$, and covariance matrix $\Sigma=(\sigma_{ij})_{1\le i,j\le d}=(E(X_iX_j))_{1\le i,j\le d}$. Then $\exp(X_i)$ follows a log-normal distribution\index{Distribution!log-normal} with mean $\exp(\sigma_{ii}/2)$, $1\le i\le d$, and, thus,
\begin{equation*}\label{eqn:definition of generator of Huesler-Reiss D-Norm}
\bfZ=(Z_1,\dots,Z_d):= \left(\exp\left(X_1-\frac{\sigma_{11}}2\right),\dots, \exp\left(X_d-\frac{\sigma_{dd}}2\right) \right)
\end{equation*}
is the generator of a $D$-norm, called \emph{H\"{u}sler-Reiss $D$-norm}\index{H\"{u}sler-Reiss $D$-norm}. This norm only depends on the covariance matrix $\Sigma$ and, therefore,  it is denoted by $\norm\cdot_{\text{HR}_{\Sigma}}$.
\end{itemize}

The generator of a $D$-norm is in general not uniquely determined, even its distribution is not. Take, for example, any rv $X>0$ with $E(X)=1$. Then $\bfZ=(Z_1,\dots,Z_d)=(X,\dots,X)$ generates the sup-norm $\norm\cdot_\infty$. An account of the theory of $D$-norms is provided by \citet{falk2019}.

\section{Generalized Pareto Copulas}\label{sec:generalized Pareto copulas}

Corollary \ref{coro:characterization of domain of attraction of copula} stimulates the following idea.  Choose an arbitrary $D$-norm $\norm\cdot_D$ on $\R^d$ and put with $\bfone=(1,\dots,1)\in\R^d$
\[
C(\bfu):= \max\left(1- \norm{\bfone-\bfu}_D,0\right),\qquad \bfu\in[0,1]^d.
\]
Each univariate margin $C_i$ of $C$, defined this way, satisfies for $u\in[0,1]$
\begin{align*}
C_i(u) &= C(1,\dots,1,\underbrace{u}_{\text{$i$-th component}},1\dots,1)\\
&= 1-\norm{(0,\dots,0,1-u,0,\dots,0)}_D\\
&= 1- (1-u)\underbrace{E(Z_i)}_{\text{$=1$}}= u,
\end{align*}
i.e., each $C_i$ is the uniform df on $(0,1)$. But $C$ does in general \emph{not} define a df, see, e.g., \citet[Proposition 5.1.3]{fahure10}. We require, therefore, the expansion
\[
C(\bfu)=1-\norm{\bfone-\bfu}_D
\]
only for $\bfu$ close to $\bfone\in\R^d$, i.e., for $\bfu\in[\bfu_0,\bfone]\subset\R^d$ with some $\bfzero<\bfu_0<\bfone\in\R^d$. A copula $C$ with this property will be called a \emph{generalized Pareto copula} (GPC). These copulas were introduced in \citet{aulbf11};  tests, whether data are generated by a copula in a $\delta$-neighborhood of a GPC were derived in \citet{aulff18}, see Section \ref{sec:delta-neighborhoods of GPC} for the precise definition of this neighborhood. The multivariate generalized Pareto distributions defined in Section \ref{sec:multivariate generalized Pareto distributions} show that GPC actually exist for any $D$-norm $\norm\cdot_D$. The corresponding construction of a generalized Pareto distributed rv also provides a way to simulate data from an arbitrary GPC.

As a consequence, an \emph{arbitrary} copula $C$ satisfies the following equivalences
\begin{align*}
&C\in\mathcal D(G)\\
&\iff C(\bfu)=1-\norm{\bfone-\bfu}_D + o(\norm{\bfone-\bfu})\quad\text{for some $D$-norm $\norm\cdot_D$}\\
&\iff C \text{\ is in its upper tail close to that of a GPC.}
\end{align*}
In this case we have $G(\bfx)=\exp(-\norm{\bfx}_D)$, $\bfx\le\bfzero\in\R^d$.
\begin{exam}
\upshape Take an arbitrary \emph{Archimedean copula}\index{Copula!Archimedean} on $\R^d$
\[
C_\varphi(\bfu)=\varphi^{-1}(\varphi(u_1)+\dots+\varphi(u_d)),
\]
where $\varphi$ is a continuous and strictly decreasing function from $(0,1]$ to $[0,\infty)$ such that $\varphi(1)=0$ (see, e.g., \citet[Theorem 2.2]{mcnn09}).
Suppose that
\begin{equation}\label{eqn:charpentier_segers_condition}
p:=-\lim_{s\downarrow 0}\frac{s\varphi\prime(1-s)}{\varphi(1-s)}\mbox{ exists in }[1,\infty).
\end{equation}
It follows from  \citet[Theorem 4.1]{chas09} that $C$ is in its upper tail close to the GPC with corresponding logistic $D$-norm $\norm\cdot_p$.

Suppose that the generator function $\varphi:(0,1]\to[0,\infty)$ satisfies with some $s_0\in (0,1)$
\begin{equation}\label{eqn:sufficient_property_of_Archimedean copula}
-\frac{s\varphi'(1-s)}{\varphi(1-s)} = p,\qquad s\in(0,s_0],
\end{equation}
with $p\in[1,\infty)$. Then $C_\varphi$ is a GPC, precisely,
\[
C_\varphi(\bfu)= 1-\norm{\bfone-\bfu}_p= 1- \left(\sum_{i=1}^d\abs{1-u_i}^p\right)^{1/p},\qquad \bfu\in[1-s_0,1]^d.
\]

This is readily seen as follows. Condition \eqref{eqn:sufficient_property_of_Archimedean copula} is equivalent with the equation
\[
(\log(\varphi(1-s)))' =\frac ps,\qquad s\in(0,s_0].
\]
Integrating both sides implies
\[
\log(\varphi(1-s)) - \log(\varphi(1-s_0)) = p\log(s)-p\log(s_0)
\]
or
\[\log\left(\frac{\varphi(1-s)}{\varphi(1-s_0)}\right)= \log\left(\left(\frac s{s_0} \right)^p \right),\qquad s\in(0,s_0],
\]
which implies
\[
\varphi(1-s)= \frac{\varphi(1-s_0)}{s_0^p}s^p,\qquad s\in[0,s_0],
\]
i.e.,
\[
\varphi(s)= c(1-s)^p, \qquad s\in[1-s_0,1],
\]
with $c:= \varphi(1-s_0)/s_0^p$. But this yields
\begin{align*}
C_\varphi(\bfu)&=\varphi^{-1}(\varphi(u_1)+\dots+\varphi(u_d))\\
&=1-\left(\sum_{i=1}^d (1-u_i)^p \right)^{1/p},\qquad \bfu\in[1-s_0,1]^d.
\end{align*}
\end{exam}

\section{Characterization of a GPC}

Next we derive the characteristic property of a GPC. Suppose the rv $\bfU$ follows a GPC $C$. Then its survival function equals
\[
P(\bfU\ge \bfu)=\dnormf{\bfone-\bfu}_D,\qquad \bfu\in[\bfu_0,\bfone]\subset\R^d,
\]
where
\[
\dnormf{\bfx}_D:= E\left(\min_{1\le i\le d}(\abs{x_i}Z_i) \right),\qquad \bfx\in\R^d,
\]
is the \emph{dual $D$-norm function} pertaining to $\norm\cdot_D$ with generator $\bfZ=(Z_1,\dots,Z_d)$, see the proof of Theorem \ref{propo:characteristic_property_of_GPC}. Using the equations \eqref{eqn:representation_of_maxima_via minima and vice versa} below it is straightforward to prove that $\dnormf{\cdot}_D$ does not depend on the particular choice of the generator $\bfZ$ of $\norm\cdot_D$.  We have, for example,
\[
\dnormf{\bfx}_1=0,\quad \dnormf{\bfx}_\infty=\min_{1\le i\le d}\abs{x_i},\qquad \bfx=(x_1,\dots,x_d)\in\R^d.
\]
Note that the mapping $\norm\cdot_D\mapsto \dnormf{\cdot}_D$ is not one-to-one, i.e., two different $D$-norms can have identical dual $D$-norm functions.

The function $\dnormf\cdot_D$ is obviously homogeneous:
\[
\dnormf{t\bfx}_D=t \dnormf{\, \bfx}_D,\qquad t\ge 0.
\]

As a consequence, a GPC is \emph{excursion stable:}
\[
P\left(\bfU\ge \bfone-t\bfu\mid \bfU\ge \bfone-\bfu\right) = \frac{\dnormf{t\bfu}_D}{\dnormf{\bfu}_D}=t,\quad t\in[0,1],
\]
for $\bfu$ close to $\bfzero\in\R^d$, provided $\dnormf{\bfu}_D>0$.

Note that each marginal distribution of a GPC $C$ is a lower dimensional GPC as well: If the rv $\bfU=(U_1,\dots,U_d)$ follows the GPC $C$ on $\R^d$, then the rv $\bfU_T:=(U_{i_1},\dots,U_{i_m})$ follows a GPC on $\R^m$, for each nonempty subset $T=\set{i_1,\dots,i_m}\subset\set{1,\dots,d}$. We have
\[
P\left(\left(U_{i_1},\dots,U_{i_m} \right)\le\bfv \right) = 1-\norm{\sum_{j=1}^m(1-v_j)\bfe_{i_j}}_D,
\]
for $\bfv=(v_1,\dots,v_m)\in[0,1]^m$ close to $\bfone\in\R^m$, where $\bfe_i=(0,\dots,0,1,0,\dots,0)\in\R^d$ denotes the $i$-th unit vector in $\R^d$, $1\le i\le d$.

The characteristic property of a GPC is its excursion stability\index{Excursion stability!of generalized Pareto copula}, as formulated in the next result.

\begin{theorem}\label{propo:characteristic_property_of_GPC}
Let the rv $\bfU=(U_1,\dots,U_d)$ follow a copula $C$. Then $C$ is a GPC  iff for each nonempty subset $T=\set{i_1,\dots,i_m}$ of $\set{1,\dots,d}$ the rv $\bfU_T=(U_{i_1},\dots,U_{i_m})$ is exceedance stable, i.e.,
\begin{equation}\label{eqn:exceedance stability of GPC}
P\left(\bfU_T\ge \bfone-t\bfu\right)=t P(\bfU_T\ge \bfone-\bfu),\qquad t\in [0,1],
\end{equation}
for $\bfu$ close to $\bfzero\in\R^m$.
\end{theorem}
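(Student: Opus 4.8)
The plan is to prove the two implications separately, with inclusion--exclusion (Möbius inversion) as the common tool. Since $\bfU$ has continuous uniform margins, for $\bfx\in[\bfzero,\bfone]$ the copula is recovered from the lower-dimensional marginal survival functions $f_S(\bfx):=P(\bfU_S\ge\bfone-\bfx)$ (which depend only on the coordinates of $\bfx$ indexed by $S$) through
\[
C(\bfone-\bfx)=1+\sum_{\emptyset\neq S}(-1)^{\abs{S}}f_S(\bfx),
\]
and, dually, each $f_T$ is expressed through the marginal dfs $C_S(\bfone-\bfx)=P(\bfU_S\le\bfone-\bfx)$ by $f_T(\bfx)=1+\sum_{\emptyset\neq S\subseteq T}(-1)^{\abs{S}}C_S(\bfone-\bfx)$. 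I shall also use the elementary identity $\sum_{\emptyset\neq S\subseteq T}(-1)^{\abs{S}}\max_{i\in S}a_i=-\min_{i\in T}a_i$, valid for $a_i\ge0$, which underlies equation~\eqref{eqn:representation_of_maxima_via minima and vice versa}.

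For necessity, assume $C$ is a GPC with generator $\bfZ$, so that $C_S(\bfone-\bfx)=1-\norm{\sum_{i\in S}x_i\bfe_i}_D=1-E\big(\max_{i\in S}x_iZ_i\big)$ for every $S$ once $\bfx$ is small enough (this is the marginal identity noted before the theorem). Substituting into the second display, using $\sum_{\emptyset\neq S\subseteq T}(-1)^{\abs{S}}=-1$, pulling the finite sum inside the expectation, and invoking the max--min identity collapses the expression to $f_T(\bfx)=E\big(\min_{i\in T}x_iZ_i\big)$, i.e., the dual $D$-norm function of the $T$-marginal. This is positively homogeneous of degree~$1$, hence $f_T(t\bfx)=t f_T(\bfx)$ for $t\in[0,1]$, which is precisely the exceedance stability of $\bfU_T$; the special case $T=\set{1,\dots,d}$ also yields the survival-function formula stated before the theorem.

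For sufficiency, assume every $\bfU_T$ is exceedance stable on some box $[\bfzero,\bfeta_T]$, so each $f_T$ is positively homogeneous of degree~$1$ there. Intersecting the finitely many boxes into a common $[\bfzero,\bfeta]$ and substituting into the first display gives $C(\bfone-\bfx)=1-g(\bfx)$ for $\bfx\in[\bfzero,\bfeta]$, where $g(\bfx):=-\sum_{\emptyset\neq S}(-1)^{\abs{S}}f_S(\bfx)$ is positively homogeneous of degree~$1$, nonnegative (since $C\le1$), continuous (copulas are $1$-Lipschitz), and normalised by $g(\bfe_i)=1$ because the margins of $C$ are uniform; extend $g$ to $[0,\infty)^d$ by homogeneity. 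Then for every $\bfy\le\bfzero$ and all large $n$ one has $C^n(\bfone+\bfy/n)=(1-g(-\bfy)/n)^n\to_{n\to\infty}\exp(-g(-\bfy))$, which is condition~\eqref{cond:copula in domain of attraction}, so $C\in\mathcal D(G)$ with $G(\bfy)=\exp(-g(-\bfy))$ for $\bfy\le\bfzero$. Because $G$ is a pointwise limit of (restrictions of) genuine dfs, is continuous, has standard negative exponential margins, and tends to $0$ as any coordinate tends to $-\infty$, it is an honest standard max-stable df; Corollary~\ref{coro:characterization of domain of attraction of copula} together with the characterisation of the admissible norms (equation~\eqref{eqn:characterization of sms df} and the discussion following it) then identifies $g$ on $[0,\infty)^d$ with a $D$-norm $\norm{\cdot}_D$. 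Hence $C(\bfu)=1-\norm{\bfone-\bfu}_D$ for $\bfu$ near $\bfone$, i.e., $C$ is a GPC.

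The hard part is sufficiency: the function $g$ produced by inclusion--exclusion is, a priori, only a nonnegative positively-homogeneous function, and one has to upgrade it to a $D$-norm. The leverage is that $C$ has the \emph{exact} linear upper-tail expansion $C(\bfone-\bfx)=1-g(\bfx)$ near $\bfone$, which already forces $C$ into a max-domain of attraction, whereupon the structural characterisation of standard max-stable dfs supplies the $D$-norm. The one genuinely delicate point in that step is to confirm that the pointwise limit $G$ is indeed a df; everything else --- continuity of the uniform margins, the interchange of finite sums and expectations, intersecting the $2^d-1$ neighbourhoods, and the max--min identity --- is routine bookkeeping.
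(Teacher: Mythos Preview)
Your necessity argument (``$\Rightarrow$'') is essentially identical to the paper's: both express $P(\bfU_T\ge\bfone-\bfv)$ via inclusion--exclusion in terms of the marginal copula values, substitute the GPC identity $C_S(\bfone-\bfv)=1-\norm{\sum_{i\in S}v_i\bfe_i}_D$, and collapse the alternating sum using the max--min identity~\eqref{eqn:representation_of_maxima_via minima and vice versa} to obtain the dual $D$-norm function $\dnormf{\bfv}_D$, whose homogeneity gives exceedance stability.

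Your sufficiency argument (``$\Leftarrow$''), however, takes a genuinely different route. The paper does not prove this direction at all; it simply records that it is a reformulation of \citet[Proposition~6]{fagu08}. You instead give a self-contained argument: inclusion--exclusion rebuilds $1-C(\bfone-\bfx)$ as a positively homogeneous function $g$, and the exact relation $C(\bfone-\bfx)=1-g(\bfx)$ near $\bfone$ forces $C^n(\bfone+\bfy/n)\to\exp(-g(-\bfy))$, placing $C$ in $\mathcal D(G)$; then the structural characterisation of SMS dfs (Theorem~2.1 / equation~\eqref{eqn:characterization of sms df}) identifies $g$ with a $D$-norm. This is a clean way to avoid the external citation, and it illuminates \emph{why} exceedance stability of all margins suffices: it produces the exact linear tail expansion, not merely the first-order one in Corollary~\ref{coro:characterization of domain of attraction of copula}. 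The cost is the step you yourself flag as delicate --- verifying that the pointwise limit $G$ is an honest df. Your justification (continuity of $g$, correct margins, $G\to 0$ as any $y_i\to-\infty$) is adequate once one notes that $\Delta$-monotonicity passes to pointwise limits and that $g(\bfx)\ge\norm{\bfx}_\infty$ (from $1-C(\bfone-\bfx)\ge x_i$ for each $i$), which secures the boundary behaviour; it would be worth making these two points explicit.
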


\begin{proof}
The  implication ``$\Leftarrow$" in the preceding result is just a reformulation of \citet[Proposition 6]{fagu08}. The conclusion ``$\Rightarrow$" can be seen as follows. We can assume without loss of generality that $T=\set{1,\dots,d}$.

Using induction, it is easy to see that arbitrary numbers $a_1,\dots, a_d \in \R$ satisfy the equations
\begin{align}\label{eqn:representation_of_maxima_via minima and vice versa}
\max(a_1,\dots, a_d)= \sum_{\emptyset \neq T \subset \{1, \dots, d\}}(-1)^{\abs{T}-1}\min_{i \in T}\,a_i,\nonumber\\
\min(a_1,\dots, a_d)= \sum_{\emptyset \neq T \subset \{1, \dots, d\}}(-1)^{\abs{T}-1}\max_{i \in T}\,a_i.
\end{align}

By choosing $a_1=\dots=a_d=1$, the preceding equations imply in particular
\begin{equation}\label{eqn:inclusion_exclusion_cases=1}
1=  \sum_{\emptyset \neq T \subset \{1, \dots, d\}}(-1)^{\abs{T}-1}.
\end{equation}

 The inclusion-exclusion principle implies for $\bfv\in[0,1]^d$ close to $\bfzero\in\R^d$
\begin{align*}
P(\bfU\ge 1-\bfv)
&= 1-P\left(\bigcup_{i=1}^d\set{U_i\le 1-v_i}\right)\\
&= 1- \sum_{\emptyset\not=T\subset\set{1,\dots,d}} (-1)^{\abs T-1} P(U_i\le 1-v_i,\,i\in T)\\
&= 1- \sum_{\emptyset\not=T\subset\set{1,\dots,d}} (-1)^{\abs T-1}\left(1-\norm{\sum_{i\in T}v_i\bfe_i}_D\right)\\
&= \sum_{\emptyset\not=T\subset\set{1,\dots,d}} (-1)^{\abs T-1} \norm{\sum_{i\in T}v_i\bfe_i}_D.
\end{align*}

Choose a generator $\bfZ=(Z_1,\dots,Z_d)$ of $\norm\cdot_D$. From equation \eqref{eqn:representation_of_maxima_via minima and vice versa} we obtain
\begin{align*}
&\sum_{\emptyset\not=T\subset\set{1,\dots,d}} (-1)^{\abs T-1} \norm{\sum_{i\in T}v_i\bfe_i}_D\\
&= \sum_{\emptyset\not=T\subset\set{1,\dots,d}} (-1)^{\abs T-1} E\left(\max_{i\in T}(v_iZ_i) \right)\\
&= E\left(\sum_{\emptyset\not=T\subset\set{1,\dots,d}} (-1)^{\abs T-1} \max_{i\in T}(v_iZ_i) \right)\\
&= E\left(\min_{1\le i\le d}(v_iZ_i)\right)
= \dnormf{\bfv}_D.
\end{align*}
Replacing $\bfv$ by $t\bfu$  yields the assertion.
\end{proof}

If $P(\bfU_T\ge \bfone-\bfu)>0$, then \eqref{eqn:exceedance stability of GPC} clearly becomes
\[
P\left(\bfU_T\ge \bfone-t\bfu\mid \bfU_T\ge \bfone-\bfu\right)=t,\qquad t\in[0,1].
\]
But $P(\bfU_T\ge \bfone-\bfu)$ can be equal to zero for all $\bfu$ close to $\bfone\in\R^m$. This is for example the case, when the underlying $D$-norm $\norm\cdot_D$ is $\norm\cdot_1$. Then $\dnormf\cdot_D=0$, and, thus, $P(\bfU_T\ge \bfone-\bfu)=0$ for all $\bfu$ close to $\bfzero\in\R^m$, unless $m=1$.

While the characteristic property of a GPC is its excursion stability, the characteristic property of an extreme value copula $C_G(\bfu)=G\left(G_1^{-1}(u_1),\dots,G_d^{-1}(u_d) \right)$, $\bfu\in(0,1)^d$, which corresponds to a max-stable df $G$, is its max-stability, defined below. By transforming the univariate margins to the standard negative distribution, we can assume without loss of generality that $G$ is an SMS df. In this case we have $G_i^{-1}(u)=\log(u)$, $u\in(0,1]$, and, thus, we obtain the representation of the copula of an \emph{arbitrary} max-stable df
\begin{equation}\label{eqn:copula of SMS df}
C_G(\bfu)=\exp\left(-\norm{(\log(u_1),\dots,\log(u_d))}_D\right),\qquad \bfu\in(0,1]^d,
\end{equation}
with some $D$-norm $\norm\cdot_D$. For a discussion of parametric families of extreme value copulas and their statistical analysis we refer to \citet{gennes12}.

Equation \eqref{eqn:copula of SMS df} obviously implies the \emph{max-stability} of an extreme value copula $C_G$:
\begin{equation}\label{eqn:max-stability of extreme value copula}
C_G^n\left(\bfu^{1/n}\right) = C_G(\bfu),\qquad \bfu\in(0,1]^d,\;n\in\N.
\end{equation}
If, on the other hand, an \emph{arbitrary} copula $C$ satisfies equation \eqref{eqn:max-stability of extreme value copula}, then it is clearly the copula $C_G$ of a SMS df $G$.
As a consequence, we have two stabilities of copulas: max-stability and exceedance stability.

Let $C$ be an arbitrary copula on $\R^d$. The considerations in this section show that the copula $C_{C^n}$ of $C^n$ converges point-wise to a max-stable copula if, and only if, $C$ is in its upper tail close to that of an excursion stable copula, i.e., to that of a GPC.

The message of the considerations in this section is: If one wants to model the \emph{copula} of multivariate exceedances above high thresholds, then a GPC is a first option.

\section{Multivariate Generalized Pareto Distributions}\label{sec:multivariate generalized Pareto distributions}

Let $\set{G_\alpha:\,\alpha\in\R}$ be the set of univariate max-stable df as defined  by the equations above and  in \eqref{defn:of univariate EVD}.  The family of univariate \emph{generalized Pareto distributions} (GPD) is the family of univariate excursion stable distributions:
\begin{align*}
H_\alpha (x)
&:= 1+\log(G_\alpha(x)),\qquad G_\alpha(x)> \exp(-1),\\
&=\begin{cases}
1-(-x)^\alpha,\quad -1\le x\le 0,&\mbox{ if }\alpha>0,\\
1-x^\alpha,\qquad\quad\;\, x\ge 1,&\mbox{ if }\alpha<0,\\
1-\exp(-x),\quad x\ge 0,&\mbox{ if }\alpha=0.
\end{cases}
\end{align*}

Suppose the rv $V$ follows the df $H_\alpha$. Then
\begin{align*}
P(V>tx\mid V>x)&=t^\alpha\quad\text{for }\begin{cases}
t\in[0,1],& -1\le x<0,\text{ if }\alpha>0,\\
t\ge 1,& x\ge 1,\text{ if }\alpha<0,
\end{cases}\\
P(V>x+t\mid V>x)&=\exp(-t),\quad\text{for }t\ge 0,\quad x\ge 0,\text{ if }\alpha=0.
\end{align*}

For a threshold $s$ and an $x>s$, the univariate GPD takes the form of the following scale and shape family of distributions
\begin{equation}\label{eq:uni_GPD}
H_{1/\xi}((x-s)/\sigma)=1-\left(1+\xi(x-s)/\sigma\right)^{-1/\xi},
\end{equation}
where $\xi=1/\alpha$ and $\sigma>0$ \citep[e.g.][page 35]{fahure10}.

The definition of a \emph{multivariate} GPD is, however, not unique in the literature. There are different approaches (\citet{roott06}, \citet{fahure10}), each one trying to catch the excursion stability of a multivariate rv. The following suggestion might conclude this debate. Clearly, the excursion stability of a rv $\bfX$ should be satisfied by its margins \emph{and} its copula. This is reflected in the following definition.

\begin{defi}\label{def:definition of multivariate gpd}
\upshape A rv $\bfX=(X_1,\dots,X_d)$ follows a multivariate GPD, if each component $X_i$ follows a univariate GPD (at least in its upper tail), and if the copula $C$ corresponding to $\bfX$ is a GPC, i.e., there exists a $D$-norm $\norm\cdot_D$ on $\R^d$ and $\bfu_0\in[0,1)^d$ such that
\[
C(\bfu)=1-\norm{\bfone-\bfu}_D,\qquad \bfu\in[\bfu_0,\bfone].
\]
\end{defi}

As a consequence, each such rv $\bfX$, which follows a multivariate GPD, is exceedance stable and vice versa.

\begin{exam}
\upshape The following construction extends the bivariate approach proposed by \citet{buihz08} to arbitrary dimension. It provides a rv, which follows an arbitrary multivariate GPD as in Definition \ref{def:definition of multivariate gpd}. Let $\bfZ=(Z_1,\dots,Z_d)$ be the generator of a $D$-norm $\norm\cdot_D$, with the additional property that each $Z_i\le c$, for some $c\ge 1$. Note that such a generator exists for an arbitrary $D$-norm according to the \emph{normed generators theorem} for $D$-norms (\citet{falk2019}). Let the rv $U$ be uniformly on $(0,1)$ distributed and independent of $\bfZ$. Put
\begin{equation}\label{exam:simple multivariate GPD rv}
\bfV=(V_1,\dots,V_d):= \frac 1U(Z_1,\dots,Z_d):= \frac 1U \bfZ.
\end{equation}
Then, for each $i\in\set{1,\dots,d}$,
\[
P\left(\frac 1U Z_i \le x \right) = 1- \frac 1x, \qquad x \text{ large},
\]
i.e., $V_i$ follows in its upper tail a univariate standard Pareto distribution, and, by elementary computation, we have
\[
P\left(\bfV\le \bfx \right) = 1- \norm{\frac{\bfone}{\bfx}}_D,\qquad \bfx\text{ large}.
\]
The preceding equation implies that the copula of $\bfV$ is a GPC with corresponding $D$-norm $\norm\cdot_D$. The rv $\bfV$ can be seen as a prototype of a rv, which follows a multivariate GPD. This GPD is commonly called \emph{simple}.

Choose $\bfV=(V_1,\dots,V_d)$ as in equation \eqref{exam:simple multivariate GPD rv} and numbers $\alpha_1,\dots,\alpha_d\in\R$. Then
\begin{align}\label{eqn:prototype representation of general multivariate gpd}
\bfY&:=(Y_1,\dots,Y_d)\nonumber\\
&:= \left(H_{\alpha_1}^{-1}\left(1-\frac 1{V_1}\right),\dots, H_{\alpha_d}^{-1}\left(1-\frac 1{V_d}\right)\right)\nonumber\\
&= \left(H_{\alpha_1}^{-1}\left(1-\frac U{Z_1}\right),\dots, H_{\alpha_d}^{-1}\left(1-\frac U{Z_d}\right)\right)
\end{align}
follows a \emph{general} multivariate GPD with margins $H_{\alpha_1},\dots,H_{\alpha_d}$ in its univariate upper tails.

With the particular choice $\alpha_1=\dots=\alpha_d=1$ we obtain a \emph{standard} multivariate GPD
\[
\bfY = -U\left(\frac 1{Z_1},\dots,\frac1{Z_d}\right).
\]
Its df is
\[
P(\bfY\le\bfx)=1-\norm{\bfx}_D
\]
for $\bfx\le\bfzero\in\R^d$, close enough to zero.

With the particular choice $\alpha_1=\dots=\alpha_d=0$ we obtain a multivariate GPD with Gumbel margins in the upper tails
\[
\bfY= \left(\log(Z_1)-\log(U),\dots,\log(Z_d)-\log(U)\right),
\]
where $-\log(U)$ follows the standard exponential distribution on $(0,\infty)$.

Up to a possible location and scale shift, \emph{each} rv $\bfX=(X_1,\dots,X_d)$, which follows a multivariate GPD as defined in Definition \ref{def:definition of multivariate gpd}, can in its upper tail be modeled by the rv $\bfY=(Y_1,\dots,Y_d)$ in equation \eqref{eqn:prototype representation of general multivariate gpd}. This makes such rv $\bfY$ in particular natural candidates for simulations of multivariate exceedances above high thresholds.
\end{exam}

\section{$\delta$-Neighborhoods of GPC}\label{sec:delta-neighborhoods of GPC}

A major problem with the construction in \eqref{exam:simple multivariate GPD rv} is the additional boundedness condition on the generator $\bfZ$. This is, for example, not given in case of the logistic $D$-norm $\norm\cdot_p$ with $p\in(1,\infty)$ or the H\"{u}sler-Reiss $D$-norm. From the normed generators theorem in \citet{falk2019} we know that bounded generators exist, but, to the best of our knowledge, they are unknown in both cases.

In this section we drop this boundedness condition and show that  the construction \eqref{exam:simple multivariate GPD rv} provides a copula, which is in a particular neighborhood of a GPC, called $\delta$-neighborhood. We are going to define this neighborhood next.

 Denote by $R:=\set{\bft\in[0,1]^d:\,\norm{\bft}_1=1}$ the unit sphere in $[0,\infty)^d$ with respect to the norm $\norm{\bfx}_1=\sum_{i=1}^d\abs{x_i}=1$, $\bfx\in\R^d$. Choose an arbitrary copula $C$ on $\R^d$ and put for $\bft\in R$
 \[
 C_{\bft}:= C(\bfone+s\bft),\qquad s\le 0.
 \]
 Then $C_{\bft}$ is a univariate df on $(-\infty,0]$, and the copula $C$ is obviously determined by the family
 \[
 \mathcal P(C):=\set{C_{\bft}:\,\bft\in R}
 \]
 of univariate \emph{spectral} df $C_{\bft}$. The family $\mathcal P(C)$ is the \emph{spectral decomposition} of $C$; cf \citet[Section 5.4]{fahure10}. A copula $C$ is, consequently, in $\mathcal D(G)$ iff its spectral decomposition satisfies
 \[
 C_{\bft}(s)=1+s\norm{\bft}_D + o(s),\qquad \bft\in R,
 \]
 as $s\uparrow 0$. The copula $C$ is by definition in the \emph{$\delta$-neighborhood} of the GPC $C_D$ with $D$-norm $\norm\cdot_D$ if their upper tails are close to one another, precisely, if
 \begin{align}\label{defn:of delta-neighborhood}
 1-C_{\bft}(s)&= (1-C_{D,\bft}(s))\left(1+O\left(\abs s^\delta\right)\right)\nonumber\\
  &= \abs s\norm{\bft}_D \left(1+O\left(\abs s^\delta\right)\right)
 \end{align}
 as $s\uparrow 0$, uniformly for $\bft\in R$. In this case we know from \citet[Theorem 5.5.5]{fahure10} that
 \begin{equation}\label{eqn:polynomial convergence of copula}
 \sup_{\bfx\in(-\infty,0]^d}\abs{C^n\left(\bfone+\frac 1n \bfx\right)-\exp(-\norm{\bfx}_D)}=O\left(n^{-\delta}\right).
 \end{equation}

 Under additional differentiability conditions on $C_{\bft}(s)$ with respect to $s$, also the reverse implication $\eqref{eqn:polynomial convergence of copula}\implies\eqref{defn:of delta-neighborhood}$ holds; cf.  \citet[Theorem 5.5.5]{fahure10}. Therefore, the $\delta$-neighborhood of a GPC, roughly, collects those copula with a polynomial rate of convergence for maxima.

 Condition \eqref{defn:of delta-neighborhood} can also be formulated in the following way:
 \begin{align*}
  1-C(\bfu) &= (1-C_D(\bfu))\left(1+O\left(\norm{\bfone-\bfu}^\delta\right) \right)\\
  &= \norm{\bfone-\bfu}_D\left(1+O\left(\norm{\bfone-\bfu}^\delta\right)\right)
 \end{align*}
as $\bfu\to\bfone\in\R^d$, uniformly for $\bfu\in[0,1]^d$, where $\norm\cdot$ is an arbitrary norm on $\R^d$.

 \begin{exam}
 \upshape Choose $\bfu\in (0,1)^d$ and put for $t\in[0,1]$
 \[
 \text{FI}\,(t,\bfu):= E\left(\sum_{i=1}^d 1_{(U_i>1-tu_i)}\,\Big|\,\sum_{i=1}^d 1_{(U_i>1-u_i)} >0 \right).
 \]
 With $t=1$, this is the \emph{fragility index}, introduced by \citet{gelhv07} to measure the stability of the stochastic system $U_1,\dots,U_d$. The system is called \emph{stable} if $\text{FI}\,(1,\bfu)$ is close to one, otherwise it is called \emph{fragile}. The asymptotic distribution of $N_{\bfu}=\sum_{i=1}^d 1_{(U_i>1-tu_i)}$, given $N_{\bfu}>0$, was investigated in \citet{falt11, falt10a}.

 If $\bfU$ follows a GPC with corresponding $D$-norm $\norm\cdot_D$, we obtain for $\bfu$ close enough to zero
 \begin{align*}
 \text{FI}\,(t,\bfu) &= \sum_{i=1}^d \frac{P(U_i>1-tu_i)}{P\left(\sum_{j=1}^d 1_{(U_j>1-u_j)}>0\right)}\\
 &= \sum_{i=1}^d \frac{tu_i}{1-P(\bfU\le 1-\bfu)}\\
 &= t \frac{\norm{\bfu}_1}{\norm{\bfu}_D}.
 \end{align*}

 Writing
 \[
 \frac{\norm{\bfu}_1}{\norm{\bfu}_D} = \frac 1{\norm{\frac{\bfu}{\norm{\bfu}_1}}_D}
 \]
 implies that there is a least favorable direction $\bfr_0\in R$ with
 \[
 \norm{\bfr_0}_D = \min_{\bfr\in R}\norm{\bfr}_D.
 \]
 A vector $\bfu$ with $\bfu=s\bfr_0$, $s>0$, maximizes the fragility index.  For arbitrary $d\ge 2$ and $\norm\cdot_D=\norm\cdot_p$, $p\in(1,\infty)$, one obtains for example $\bfr_0$ with constant entry $1/d$ and
 \[
\text{FI}\,(t,\bfu)= t \frac d{d^{1/p}}.
 \]

 If $\bfU$ follows a copula, which is in a $\delta$-neighborhood of a GPC with $D$-norm $\norm\cdot_D$, then we obtain the representation
 \[
 \text{FI}\,(t,\bfu) = t \frac{\norm{\bfu_1}}{\norm{\bfu}_D}\left(1+O\left(\norm{\bfu}^\delta\right)\right),\qquad \text{for }\bfu\to\bfzero\in\R^d.
 \]

If we replace $\bfU$ for example by $\bfX=\left(F^{-1}(U_1),\dots,F^{-1}(U_d)\right)$, where $F(x)=1-1/x$, $x\ge 1$, is the standard Pareto df, then we obtain for the fragility index
\[
\text{FI}\,(t,\bfx) =E\left(\sum_{i=1}^d 1_{(X_i>tx_i)}\,\Big|\, \sum_{i=1}^d 1_{(X_i>x_i)}>0 \right),\qquad \bfx\ge\bfone\in\R^d,\;t\ge 1,
\]
the equality
\[
\text{FI}\,(t,\bfx) = \frac 1t \frac{\norm{\bfone/\bfx}_1}{\norm{\bfone/\bfx}_D}\left(1+O\left(\norm{\bfone/\bfx}^\delta\right)\right)\qquad \text{for }x_i\to\infty,\,1\le i\le d.
\]

 \end{exam}

 Let $\bfZ=(Z_1,\dots,Z_d)$ be a generator of the $D$-norm $\norm\cdot_D$ and let $U$ be a rv, which is independent of $\bfZ$ and which follows the uniform distribution on $(0,1)$. If $\bfZ$ is bounded, then the copula of $\bfZ/U$ is a GPC $C_D$ as established in Section \ref{sec:multivariate generalized Pareto distributions}. If we drop the boundedness of $\bfZ$ and require that $E(Z_i^2)<\infty$, then, roughly, the copula of $\bfZ/U$ is in a $\delta$-neighborhood of $C_D$ with $\delta=1$. This is the content of our next result.

 \begin{theorem}\label{theo:copula of Z/U is in delta-neighborhood}
 Let $\bfZ=(Z_1,\dots,Z_d)$ generate the $D$-norm $\norm\cdot_D$. Suppose that $E(Z_i^2)<\infty$ and that the df of $Z_i$ is continuous, $1\le i\le d$. Then the copula  $C_{\bfV}$ of
 \[
 \bfV:= \frac 1U\bfZ=\frac 1U(Z_1,\dots,Z_d)
 \]
 is in the $\delta$-neighborhood of the GPC $C_D$ with $\delta=1$.
 \end{theorem}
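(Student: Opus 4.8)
The plan is to compute the spectral decomposition of $C_{\bfV}$ explicitly and compare it to that of the GPC $C_D$, verifying the defining estimate \eqref{defn:of delta-neighborhood} with $\delta=1$. First I would reduce to the survival-copula language: since each $V_i=Z_i/U$ has continuous df $F_i$, with $F_i(x)=P(Z_i/U\le x)$, the copula $C_{\bfV}$ is uniquely determined, and for $\bfu$ close to $\bfone$ it suffices to control $1-C_{\bfV}(\bfu)=P(\bfV\not\le\bfx)$ where $x_i=F_i^{-1}(u_i)\to\infty$ as $u_i\uparrow 1$. By inclusion–exclusion exactly as in the proof of Theorem \ref{propo:characteristic_property_of_GPC}, it is enough to handle $P(V_i>x_i,\ i\in T)$ for each nonempty $T$, i.e. $P(Z_i/U>x_i,\ i\in T)=P\bigl(U<\min_{i\in T}(Z_i/x_i)\bigr)=E\bigl(\min(1,\min_{i\in T}(Z_i/x_i))\bigr)$, using independence of $U$ and $\bfZ$ and $U\sim\mathrm{Unif}(0,1)$.

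Next I would expand this expectation. Write $m_T:=\min_{i\in T}(Z_i/x_i)$. Then $E(\min(1,m_T))=E(m_T)-E\bigl((m_T-1)_+\bigr)=x^{-1}\,E\bigl(\min_{i\in T}(Z_i/(x_i/\|\bfx\|))\bigr)-E((m_T-1)_+)$; more cleanly, if I parametrise $\bfx=|s|^{-1}\bft$ with $\bft\in R$ and $s\uparrow0$, then $E(m_T)=|s|\,\dnormf{\ldots}$-type terms, and summing over $T$ with the signs $(-1)^{|T|-1}$ reproduces exactly $|s|\,\norm{\bft}_D$ by the min/max identity \eqref{eqn:representation_of_maxima_via minima and vice versa} — this is the leading term, matching the GPC. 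The correction is $\sum_T(-1)^{|T|-1}E\bigl((m_T-1)_+\bigr)$, and the whole point is to show this is $O(|s|^{2})=O(|s|\cdot|s|)$ uniformly in $\bft\in R$, which gives the factor $1+O(|s|^{\delta})$ with $\delta=1$. For that I would bound $E((m_T-1)_+)\le E\bigl((Z_{i_0}/x_{i_0}-1)_+\bigr)$ for any fixed $i_0\in T$ (since $m_T\le Z_{i_0}/x_{i_0}$), and estimate $E\bigl((Z/c-1)_+\bigr)=c^{-1}E\bigl((Z-c)\mathbf 1_{\{Z>c\}}\bigr)\le c^{-1}E\bigl(Z\,\mathbf 1_{\{Z>c\}}\bigr)$. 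Because $E(Z_i^2)<\infty$, Cauchy–Schwarz gives $E(Z_i\mathbf 1_{\{Z_i>c\}})\le E(Z_i^2)^{1/2}P(Z_i>c)^{1/2}=O(c^{-1})$ by Markov, so $E((Z_i/c-1)_+)=O(c^{-2})$; since $x_i\asymp|s|^{-1}$ uniformly over $\bft\in R$ with $t_i$ bounded away from $0$, and the coordinates with small $t_i$ contribute correspondingly small survival terms, the correction is $O(|s|^{2})$ uniformly.

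There are two places I expect friction. The first is translating the threshold: one needs $F_i^{-1}(u_i)=|s|^{-1}t_i\,(1+o(1))$, i.e. that $V_i$ has an exact standard-Pareto upper tail up to lower order — from $P(Z_i/U\le x)=1-E(\min(1,Z_i/x))=1-x^{-1}+E((Z_i/x-1)_+)=1-x^{-1}+O(x^{-2})$ one gets this, but carrying the $O(x^{-2})$ through the quantile inversion uniformly and checking it only perturbs $\norm{\bft}_D$ by a $1+O(|s|)$ factor is the fiddly bookkeeping step. The second, and the genuine obstacle, is the \emph{uniformity over $\bft\in R$}: when some components $t_i$ are near $0$ the corresponding $x_i$ need not be large, so the naive "$x_i\asymp|s|^{-1}$" fails; the fix is to note that for such indices $P(V_i>x_i)$ is itself of order $t_i|s|$, so these indices can simply be dropped from the relevant subsets $T$ at the cost of an $O(|s|)$ relative error, and one reduces to subsets supported on coordinates with $t_i\ge\eta>0$ — handling this split cleanly, rather than by an ad hoc case distinction, is where the argument needs the most care. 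Once that is done, invoking \citet[Theorem 5.5.5]{fahure10} (or just the definition \eqref{defn:of delta-neighborhood}) finishes the proof.
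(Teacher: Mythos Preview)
Your overall strategy --- compute the joint survival probabilities $P(V_i>x_i,\,i\in T)=E\bigl(\min(1,m_T)\bigr)$, split off the linear piece $E(m_T)$, and bound the remainder $E((m_T-1)_+)$ via Cauchy--Schwarz plus Markov/Chebyshev --- is correct and would go through. But the second ``friction'' point is an artefact of a parametrisation slip, not a real obstacle. You write $\bfx=|s|^{-1}\bft$, yet the copula spectral decomposition in \eqref{defn:of delta-neighborhood} requires $x_i=F_i^{-1}(1+st_i)$ with $\bft\in R$ the direction in $[0,1]^d$-space; from your own tail expansion $1-F_i(x)=x^{-1}(1+O(x^{-1}))$ the inversion gives $x_i=(|s|t_i)^{-1}(1+O(|s|t_i))$, i.e.\ $1/x_i=|s|t_i(1+O(|s|))$ --- the \emph{reciprocal} of what you wrote. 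Hence when $t_i$ is near $0$ the threshold $x_i$ is \emph{larger}, not smaller, and every tail bound only improves: for any $j\in T$ one has $E((m_T-1)_+)\le E((Z_j/x_j-1)_+)=O(x_j^{-2})=O(|s|^2t_j^2)\le O(|s|^2)$ uniformly in $\bft\in R$, while the leading terms reassemble via \eqref{eqn:representation_of_maxima_via minima and vice versa} to $E\bigl(\max_i Z_i/x_i\bigr)=|s|\,\norm{\bft}_D(1+O(|s|))$. No case split on small $t_i$ is needed.

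The paper's proof is more direct and bypasses inclusion--exclusion entirely. Because the \emph{same} $U$ sits in every coordinate, the joint event is already one-dimensional: $\{V_i\le x_i,\,1\le i\le d\}=\{U\ge\max_{1\le i\le d}(Z_i/x_i)\}$. Conditioning on $\bfZ$ and integrating out the uniform $U$ gives
\[
C_{\bft}(s)=P\Bigl(U\ge |s|\max_{1\le i\le d}\bigl(t_i(1+O(s))Z_i\bigr)\Bigr)
\]
immediately, so the $D$-norm $E(\max_i t_iZ_i)=\norm{\bft}_D$ appears without the detour through $\sum_T(-1)^{|T|-1}\min_{i\in T}$; the two remainder terms $P(\max>|s|^{-1})$ and $|s|\,E\bigl(\max\cdot 1_{\{\max>|s|^{-1}\}}\bigr)$ are then controlled by exactly the Chebyshev/H\"older estimates you propose. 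Your route reaches the same destination with extra bookkeeping. For the quantile step, the paper first derives $1-F_i(x)=x^{-1}(1+O(x^{-1}))$ and then invokes \citet[Proposition 2.2.1]{fahure10} to get $F_i^{-1}(1-q)=q^{-1}(1+O(q))$, which is the clean inversion that dissolves your first friction point as well.
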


 \begin{proof}
 The df $F_i$ of $Z_i/U$ satisfies for large $x$
 \begin{align*}
 F_i(x)&=P(Z_i/x\le U)\\
 &=\int_0^x P(U\ge z/x)\,(P*Z_i)(dz)\\
 &= \int_0^x 1-\frac zx \,(P*Z_i)(dz)\\
 &= P(Z_i\le x) - \frac 1x E\left(Z_i1_{(Z_i\le x)}\right)\\
 &= 1- P(Z_i>x) - \frac 1x \left(1-E\left(Z_i 1_{(Z_i>x)}\right)\right)\\
 &= 1-\frac 1x - \left(P(Z_i>x) - \frac 1x E\left(Z_i 1_{(Z_i>x)}\right)  \right)\\
 &= \left(1-\frac 1x\right)\left(1- \frac{P(Z_i>x) - \frac 1x E\left(Z_i 1_{(Z_i>x)}\right)}{1-\frac 1x} \right),
 \end{align*}
 where by Tschebyscheff's inequality
 \[
 P(Z_i>x)\le \frac 1{x^2} E(Z_i^2)
 \]
 and, using also H\"{o}lder's inequality
 \[
  E\left(Z_i 1_{(Z_i>x)}\right)\le E(Z_i^2)^{1/2}P(Z_i>x)^{1/2}\le E(Z_i^2)^{1/2} \frac{E(Z_i^2)^{1/2}}x= \frac 1x E(Z_i^2).
 \]
 As a consequence we obtain
 \[
 F_i(x) =\left(1-\frac 1x\right)\left(1+O\left( \frac 1{x^2} \right)\right) \qquad \text{as }x\to\infty
 \]
 and, thus,
 \[
 1-F_i(x)=\frac 1 x \left(1+O\left(\frac 1x\right)\right) \qquad \text{as }x\to\infty.
 \]
 Therefore, the df $F_i$ of $Z_i/U$ is in the $\delta$-neighborhood of the standard Pareto distribution with $\delta=1$.

 From \citet[Proposition 2.2.1]{fahure10} we obtain as a consequence
 \[
 F_i^{-1}(1-q) = \frac 1q (1+O(q))
 \]
 for $q\in (0,1)$ as $q\to 0$.

 Note that each df $F_i$ is continuous, $1\le i\le d$. Choose $\bft=(t_1,\dots,t_d)\in R$. We have for $s<0$ close enough to zero
 \begin{align*}
 &C_{\bft}(s)\\
 &= P(F_i(Z_i/U)\le 1+st_i,\,1\le i\le d)\\
 &= P(Z_i/U\le F_i^{-1}(1+st_i),\,1\le i\le d)\\
 &= P\left(\frac{Z_i}U\le \frac 1{\abs st_i}(1+O(s)),\,1\le i\le d \right)\\
 &= P(U\ge \abs s t_i(1+O(s))Z_i, \,1\le i\le d)\\
 &= P\left(U\ge \abs s \max_{1\le i\le d}(t_i(1+O(s))Z_i) \right)\\
 &=\int_{\set{\max_{1\le i\le d}(t_i(1+O(s))z_i)\le 1/\abs s }} P\left(U\ge \abs s \max_{1\le i\le d}(t_i(1+O(s))z_i) \right)\,(P*\bfZ)(d\bfz)\\
 &=\int_{\set{\max_{1\le i\le d}(t_i(1+O(s))z_i)\le 1/\abs s }} 1-\abs s \max_{1\le i\le d}(t_i(1+O(s))z_i) \,(P*\bfZ)(d\bfz)\\
 &= P\left(\max_{1\le i\le d}(t_i(1+O(s))Z_i)\le \frac 1{\abs s}  \right)\\
 &\hspace*{1cm}-\abs s E\left(\max_{1\le i\le d}(t_i(1+O(s))Z_i)\; 1_{\left(\max_{1\le i\le d}(t_i(1+O(s))Z_i)\le\frac 1{\abs s}\right)} \right)\\
 &= 1- P\left(\max_{1\le i\le d}(t_i(1+O(s))Z_i)> \frac 1{\abs s}  \right)
-\abs s E\left(\max_{1\le i\le d}(t_i(1+O(s))Z_i)\right)\\
&\hspace*{1cm}+\abs s E\left(\max_{1\le i\le d}(t_i(1+O(s))Z_i)\; 1_{\left(\max_{1\le i\le d}(t_i(1+O(s))Z_i)>\frac 1{\abs s}\right)} \right).
 \end{align*}

 We have
 \[
 E\left(\max_{1\le i\le d}(t_i(1+O(s))Z_i)\right)=E\left(\max_{1\le i\le d}(t_iZ_i)\right) (1+O(s)) = \norm{\bft}_D (1+O(s))
 \]
 and, thus, applying Tschebyscheff's inequality and H\"{o}lder's inequality again,
 \begin{align*}
 &1-C_{\bft}(s)\\
 &= P\left(\max_{1\le i\le d}(t_i(1+O(s))Z_i)>\frac 1{\abs s} \right) + \abs s\norm{\bft}_D (1+O(s))\\
 &\hspace*{1cm}- \abs s E\left(\max_{1\le i\le d}(t_i(1+O(s))Z_i)\; 1_{\left(\max_{1\le i\le d}(t_i(1+O(s))Z_i)>\frac 1{\abs s}\right)} \right)\\
 &= \abs s \norm{\bft}_D (1+O(s))\\
 &= \left(1-C_{D,\bft}(s)\right)  (1+O(s))
 \end{align*}
 as $s\uparrow 0$, uniformly for $\bft\in R$. Note that there exist constants $K_1,K_2>0$ such that
 $K_1\le \norm{\bft}_D\le K_2$ for each $\bft\in R$. This completes the proof of  Theorem \ref{theo:copula of Z/U is in delta-neighborhood}.
 \end{proof}

An obvious example is the generator of a H\"{u}sler-Reiss $D$-norm
\[
\bfZ^{(1)}=\left(\exp\left(X_1-\frac{\sigma_{11}}2\right),\dots, \exp\left(X_d-\frac{\sigma_{dd}}2\right)  \right),
\]
where $\bfX=(X_1,\dots,X_d)$ is multivariate normal $N(\bfzero,\Sigma)$, $\Sigma=(\sigma_{ij})_{1\le i,j\le d}$.

Another example is the generator of the logistic norm $\norm\cdot_p$, $p\in(2,\infty)$,
\[
\bfZ^{(2)} = (Y_1,\dots,Y_d)/\Gamma(1-1/p),
\]
where $Y_1,\dots,Y_p$ are iid Fr\'{e}chet distributed with df $F(x)=\exp(x^{-p})$, $x>0$, with parameter $p>2$.

Both generators are unbounded, but they have square integrable components with continuous df. It is known that \emph{bounded} generators actually exist in both cases, but to the best of our knowledge, they are unknown.

 \citet{aulff18} propose and extensively discuss a $\chi^2$-goodness-of-fit test for testing, whether the underlying copula of iid rv in arbitrary dimension is in the $\delta$-neighborhood of a GPC with an arbitrary $\delta>0$. This test might also used to test for a GPC.
 
\section{Estimation of Exceedance Probability}\label{sec:estimaton of exceedance probability}

In this section we apply the preceding results to derive estimates of the probability that a rv $\bfU=(U_1,\dots,U_d)$, which follows a copula, realizes in an interval $[\bfx_0,\bfone]\subset[0,1]^d$, where $\bfx_0$ is close to $\bfone\in\R^d$ and, thus, there are typically no observations available to estimate this probability by its empirical counterpart. This is a typical applied problem in extreme value analysis.

Suppose that the copula of $\bfU$, say $C$, is in the domain of attraction of a max-stable df.  In this case, its upper tail is by Corollary \ref{coro:characterization of domain of attraction of copula} close to that of a GPC.

We assume that the  copula $C$ is a GPC (or very close to one in its upper tail). Being a GPC is by Theorem \ref{propo:characteristic_property_of_GPC} characterized by the equation
\begin{equation}\label{eqn:characteristic property of GPC}
P(\bfU\ge \bfone-t\bfu) = tP(\bfU\ge \bfone-\bfu),
\end{equation}
$t\in[0,1],$ for $\bfu\ge \bfzero\in\R^d$ close enough to zero.

We want to estimate
\begin{equation}\label{eq:exc_prob_unif}
q:= P(\bfU\ge \bfx_0)
\end{equation}
for some $\bfx_0$ close to one, based on independent copies $\bfU^{(1)},\dots,\bfU^{(n)}$ of $\bfU$. Even more, we want to derive confidence interval pertaining to our estimators of $q$.

Choose $\bfu_0$ close to zero, such that equation \eqref{eqn:characteristic property of GPC} is satisfied for each $t\in[0,1]$, and
\begin{equation}\label{eq:x0_u0_relation}
\bfx_0 =1-t_0\bfu_0
\end{equation}
with some $t_0\in (0,1)$. Then the unknown probability $q$ satisfies the equation
\begin{equation}\label{eqn:crucial equation for unknown probability}
q=P(\bfU\ge \bfone-t_0\bfu_0)=t_0 P(\bfU\ge \bfone- \bfu_0)=:t_0 p.
\end{equation}

The threshold $\bfone- \bfu_0$ should be much smaller than the initial threshold $\bfx_0=\bfone-t_0\bfu_0$, in which case the the unknown probability $p$ can be estimated from the data by
\[
\hat p_n:=\frac 1n \sum_{i=1}^n 1\left(\bfU^{(i)}\ge \bfone-\bfu_0\right).
\]

Note that $n\hat p_n$ is binomial distributed $B(n,p)$; a confidence interval for $p$ can be obtained by Clopper-Pearson, for example. A popular approach is due to \citet{agresticoull}; see also \citet{browncaidasgupta}.

A confidence interval for $p$, say $I=(a,b)$, can by equation \eqref{eqn:crucial equation for unknown probability} be turned into a confidence interval $I^*$ for $q$ (with the same confidence level) by putting
\[
I^*:= t_0 I=(t_0a,t_0b).
\]

\subsection{Determination of $u_0$}

It is clear that one would like to choose $\bfu_0$ as large as possible, so that one has more observations in $[\bfone-\bfu_0,\bfone]$. But, on the other hand, the GPC property equation \eqref{eqn:characteristic property of GPC} needs to be satisfied as well. In what follows we describe a proper way how to choose $\bfu_0$.

A possible solution to check, if condition \eqref{eqn:characteristic property of GPC} is satisfied for $\bfu_0=(u_{01},\dots,u_{0d})$, is as follows. If condition \eqref{eqn:characteristic property of GPC} is satisfied, then we obtain for the conditional distribution
\[
P(\bfU\ge \bfone-t\bfu_0\mid \bfU\ge \bfone-\bfu_0)=t,\qquad t\in[0,1],
\]
or
\begin{align*}
P\left(\max_{1\le j\le d}\left(\frac{1-U_j}{u_{0j}}\right)\le t\,\big|\, \max_{1\le j\le d}\left(\frac{1-U_j}{u_{0j}}\right)\le 1  \right)= t,\qquad t\in[0,1].
\end{align*}

\

This means that those observations in the data $\max_{1\le j\le d}\left(\left(1-U_j^{(i)}\right)/u_{0j}\right)$, $1\le i\le n$, which are not greater than one,
actually follow the uniform distribution on $(0,1)$. We denote these by $M_1,\dots,M_m$, where their number $m$ is a random variable:
\[
m=\sum_{i=1}^n 1\left(\max_{1\le j\le d}\left(\left(1-U_j^{(i)}\right)/u_{0j}\right)\le 1\right).
\]
It is easy to check, if $M_1,\dots,M_m$ are independent and on $(0,1)$  uniformly distributed random variables, conditional on $m$. Standard goodness-of-fit tests like the Kolmogorov-Smirnov test or the Cram\'{e}r-von Mises test can be applied. Alternatively, $M_1,\dots,M_m$ can be transformed to independent standard normal random variables by considering $\Phi^{-1}(M_i)$, and standard tests for normality such as the Shapiro-Wilk test can be applied. The preceding problem was already discussed in \citet[Section 5.8]{fahure10}.

Put for $t\in[0,1]$
\[
\bfu(t):= \frac {\bfone-\bfx_0}t.
\]
Then, clearly,
\[
\bfx_0=(x_{01},\dots,x_{0d})=\bfone-t\bfu(t),\qquad t\in[0,1].
\]
But as $\bfu(t)$ needs to be in $[0,1]^d$, we obtain the restriction
\[
0\le \frac{1-x_{0j}}t\le 1,\qquad 1\le j\le d,
\]
or
\[
1-x_{0j}\le t\le 1,\qquad 1\le j\le d,
\]
i.e.,
\[
t_{\text{low}}:=\max_{1\le j\le d}(1-x_{0j})\le t\le 1.
\]
Choosing $\bfu_0$ as large as possible now becomes choosing $t\ge t_{\text{low}}$ as small as possible.

Put for $t\in[t_{\text{low}},1]$
\[
\hat q_n(t):= t\hat p_n(t):= \frac tn \sum_{i=1}^n1\left(\bfU^{(i)}\ge \bfone-\bfu(t)\right).
\]

We obtain for each $t\in[t_{\text{low}},1]$ observations $M_1(t),\dots,M_{m(t)}(t)$ in the data $\max_{1\le j\le d}\left(\left(1-U_j^{(i)}\right)/u_j(t)\right)$, $1\le i\le n$, which are not greater than one. We check for each $t$, whether $M_1(t),\dots,M_{m(t)}(t)$ follow the uniform distribution on $(0,1)$ by plotting corresponding $p$-value functions:
\[
(t,p_1(t)),\quad (t,p_2(t)), \qquad t\in[t_l,1].
\]
Precisely, we plot the minimum of $p_1(t)$ and $p_2(t)$, obtained from the Kolmogorov-Smirnov test and the Cramer-Von Mises test.

A candidate for $t_0$ is the lowest possible value that leads to a minimum $p$-value of at least 50\%. This is done in Figure \ref{fig:plot of p-value functions}.

\begin{figure}[ht]
\includegraphics[width=0.9\textwidth]{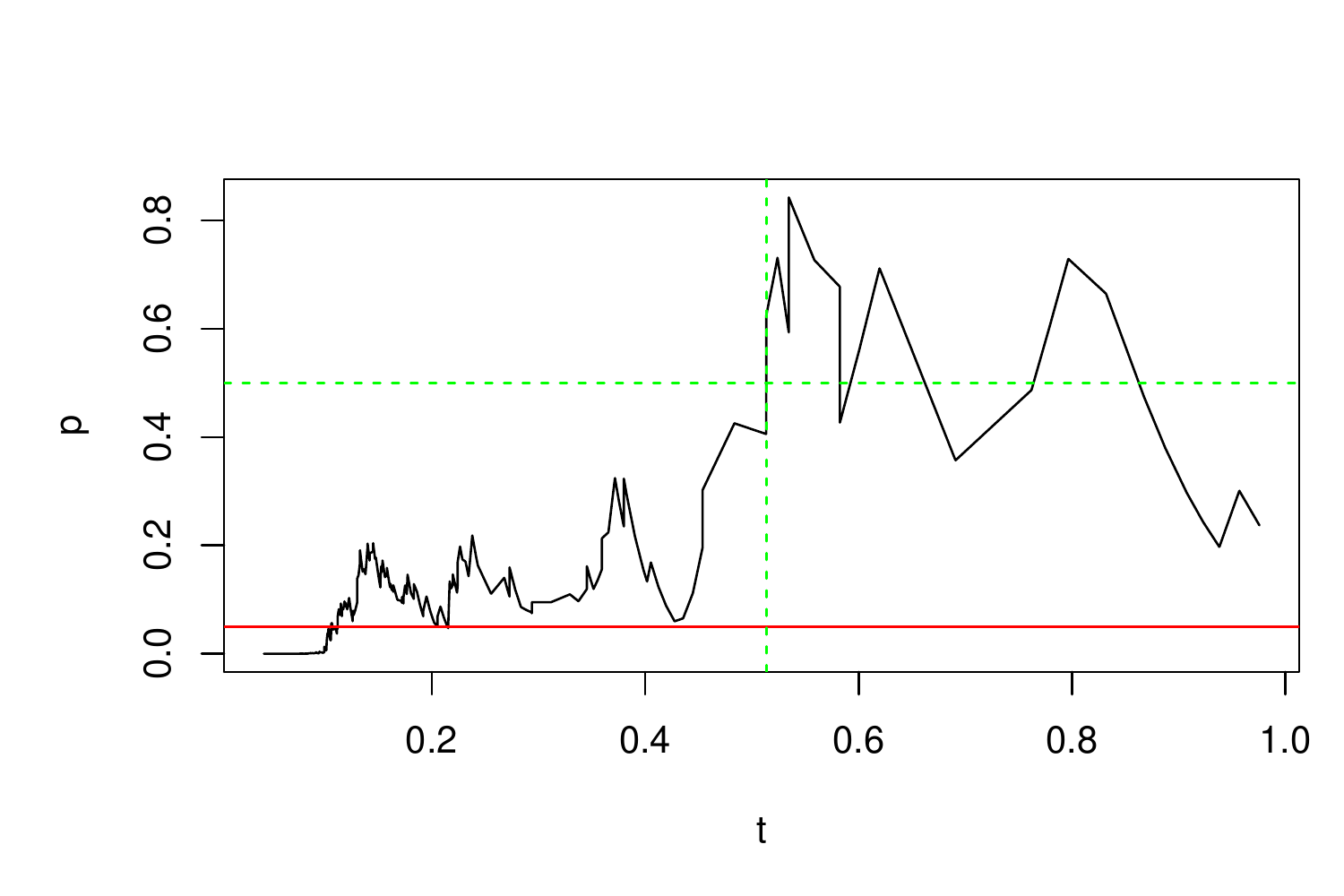}
\caption{\ Plot of $p$-value functions for testing for the uniform distribution on $(0,1)$.}
\label{fig:plot of p-value functions}
\end{figure}

\subsection{Confidence interval}

Now that we have chosen $\bfu_0$, we can estimate $p=P(\bfU\ge \bfu_0)$ as described before by
\[
\hat p_n:=\frac 1n \sum_{i=1}^n 1\left(\bfU^{(i)}\ge \bfone-\bfu_0\right).
\]

Under our model assumptions, the random variable $n\hat p_n$ is binomial distributed $B(n,p)$ and a confidence interval for $p$ can be obtained by Clopper-Pearson or the \citet{agresticoull} approach.

Figure \ref{fig:hat q_n(t)} shows $\hat q_n(t)$ together with the upper and lower limits of the corresponding confidence interval at the $95\%$ level. The green line marks the selected value for $t_0$ from before. Both plots are derived from scenario 6 in the following case study.

\begin{figure}[t!]
\includegraphics[width=0.9\textwidth]{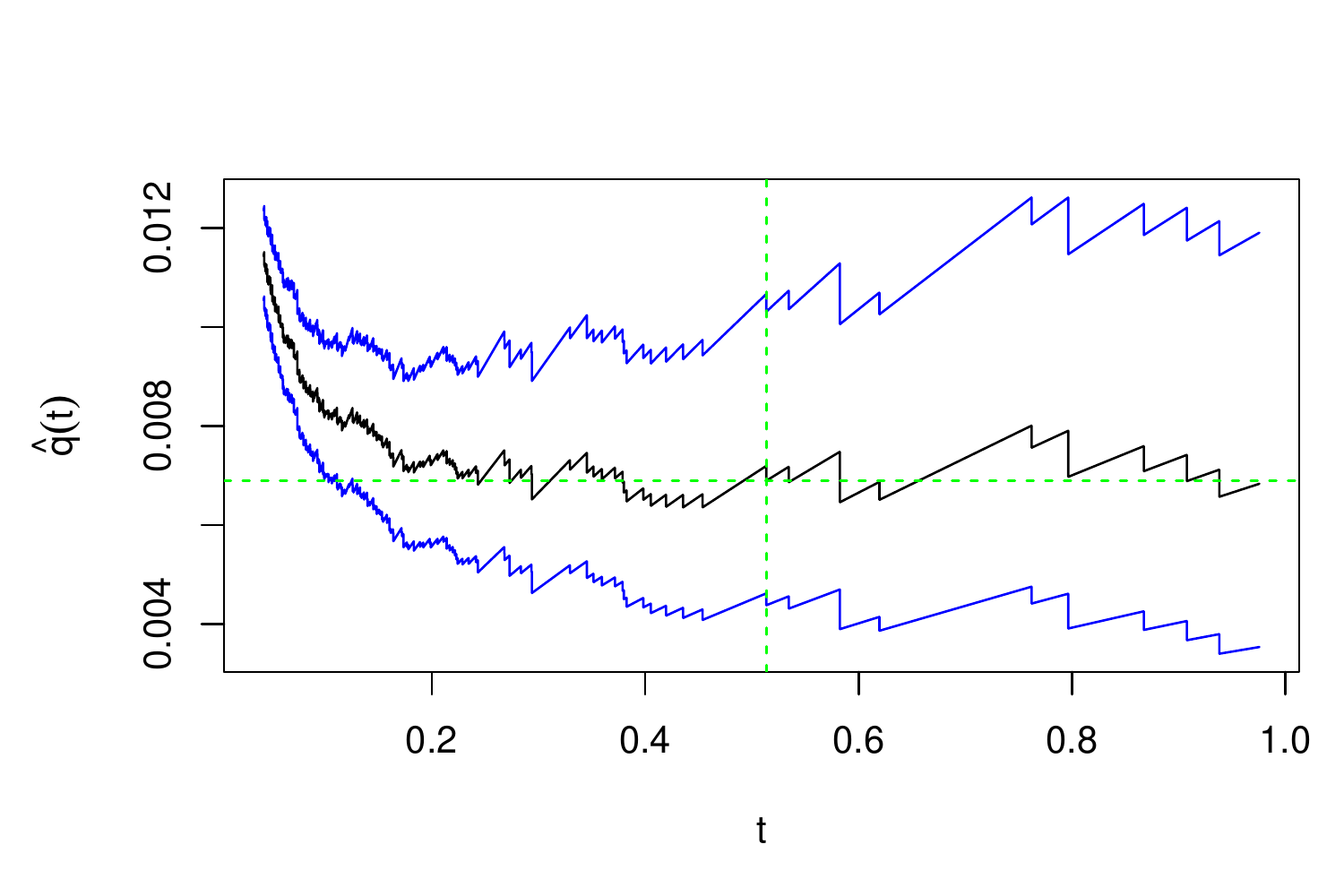}
\caption{\ Plot of the function $\hat q_n(t):= t\hat p_n(t)$ with confidence limits.}
\label{fig:hat q_n(t)}
\end{figure}

\section{A Case Study}\label{sec:case study}

Air pollution is an important social issue.
It is well-recognized that high emissions of air pollutants have a negative impact on the environment, climate and living being,
e.g. \citet[and the reference therein]{rossi1999air, brunekreef2002air, world2006air, guerreiro2016air}.
According to \citet{guerreiro2016air}, over a number of decades
 the European policy on the air-quality standards have assisted in reducing emissions of air pollutants.
The European air pollution directives regulate emissions of certain pollutants as ozone (O$_3$), nitrogen dioxide (NO$_2$), nitrogen oxide (NO), sulphur dioxide (SO$_2$)  and particulate matter (PM$_{10}$), with the aim of reducing the risk of negative effects on human health and environment that these might cause. The last three pollutants are mainly
produced by fuel motor vehicles, industry and house-heating, while the first two are produced by some reactions in the atmosphere.
On the basis of the World Health Organization (WHO) guidelines (\citet{world2006air}), the European emission regulation for air quality standard provides some pollutants concentrations that should not be exceeded.
\begin{table}[t]
\begin{center}
 \begin{tabular}{ccccc}
\toprule
Pollutant & Threshold & Period & Value in $\mu$g/m$^3$ & Recommendation\\
\midrule
%O$_3$ & Limit & Daily max & 100 & \\
O$_3$ & Limit & Daily max & 120 &  no more than 25\\
&&&& exceedances per year\\
& Information &  & 180 & \\
 & Alert &  & 240 & \\
 \midrule
NO$_2$ & Limit & 1-hour mean & 200 & no more than 18\\
&&&& exceedances per year\\
 & Alert &  & 400 & \\
 \midrule
%
%SO$_2$ & Limit & 24-hour mean & 20 & \\
%& Target &  & 50 & \\
SO$_2$ & Limit & 24-hour mean & 125 & no more than 3\\
&&&& exceedances per year\\
\midrule
PM$_{10}$ & Limit & 24-hour mean & 50 & no more than 35 \\
&&&& exceedances per year\\
%& Target &  & 100 & \\
& Target &  & 150 & \\
\bottomrule
\end{tabular}
\caption{Pollutant concentrations (thresholds) that should not be exceeded according the European emission regulation for air quality standard.}\label{tab:thresholds}
\end{center}
\end{table}
Table \ref{tab:thresholds} reports the short-term guideline values (see \citet[Chapters 10--13]{world2006air}, \citet[Chapters 4--6,8]{guerreiro2016air}). For NO
the same thresholds than those for NO$_2$ can be considered.
Meeting the short-term concentrations protects against air pollution peaks which can be dangerous to health.
The Limit threshold is a high percentile of the pollutant concentration (e.g. hourly, daily mean) in a year. It is recommended not to exceed this threshold with the objective to minimize health effects. Similarly, Target thresholds are proposed for reduction of air pollution when the pollutant concentrations are still considered very high.
Finally, in a country when the Information threshold is exceeded the
authorities need to notify their citizens by a public information notice. While, when the Alert threshold is exceeded for three consecutive hours, the authorities need to draw up a shortâterm action plan in accordance with specific provisions established in European Directive.
The threshold values are set for each individual pollutant without taking into account the dependence among pollutants. However,  it is well understood that certain pollutants can be dependent on each other; see, e.g., \citet{dahlhaus2000graphical, clapp2001analysis, heffernan2004conditional, world2006air}.

Here, we investigate which combinations of thresholds in Table \ref{tab:thresholds} are likely to be jointly exceeded and which ones are not.
Exceedances of individual thresholds are scarce when these are indeed high pollutant concentrations. This implies in this case that joint exceedances are even more rare. The latter event is a very rare one but it is a very severe pollution episode. Therefore, accurate estimation of joint exceedance probabilities is an important task. We show how to perform this ambitious mission using the method described in the previous section.
We do so analyzing the concentration of O$_3$, NO$_2$, NO, SO$_2$ and PM$_{10}$,  measured at the ground level in $\mu$g/m$^3$ in the Milan city center, Italy, during the years 2002--2017. Data are collected and made available by the Italian government agency  \emph{Agenzia Regionale per la Protezione dell'Ambiente (ARPA)}, see
{\tt http://www.arpalombardia.it/sites/QAria}. The first four pollutants are recorded in the average hourly format while the fifth in the daily average. To reveal the dependence among the pollutants we focus on two seasons: summer (May--August) and winter (November--February) (\citet{heffernan2004conditional}).
Since the thresholds in Table  \ref{tab:thresholds} are designed for different averaging periods, for comparison purposes we  focus on the daily maximum (of hourly averages) for all the pollutants except for PM$_{10}$ where we are forced to consider the daily average.
\begin{figure}[t!]
\includegraphics[width=0.6\textwidth, page=1]{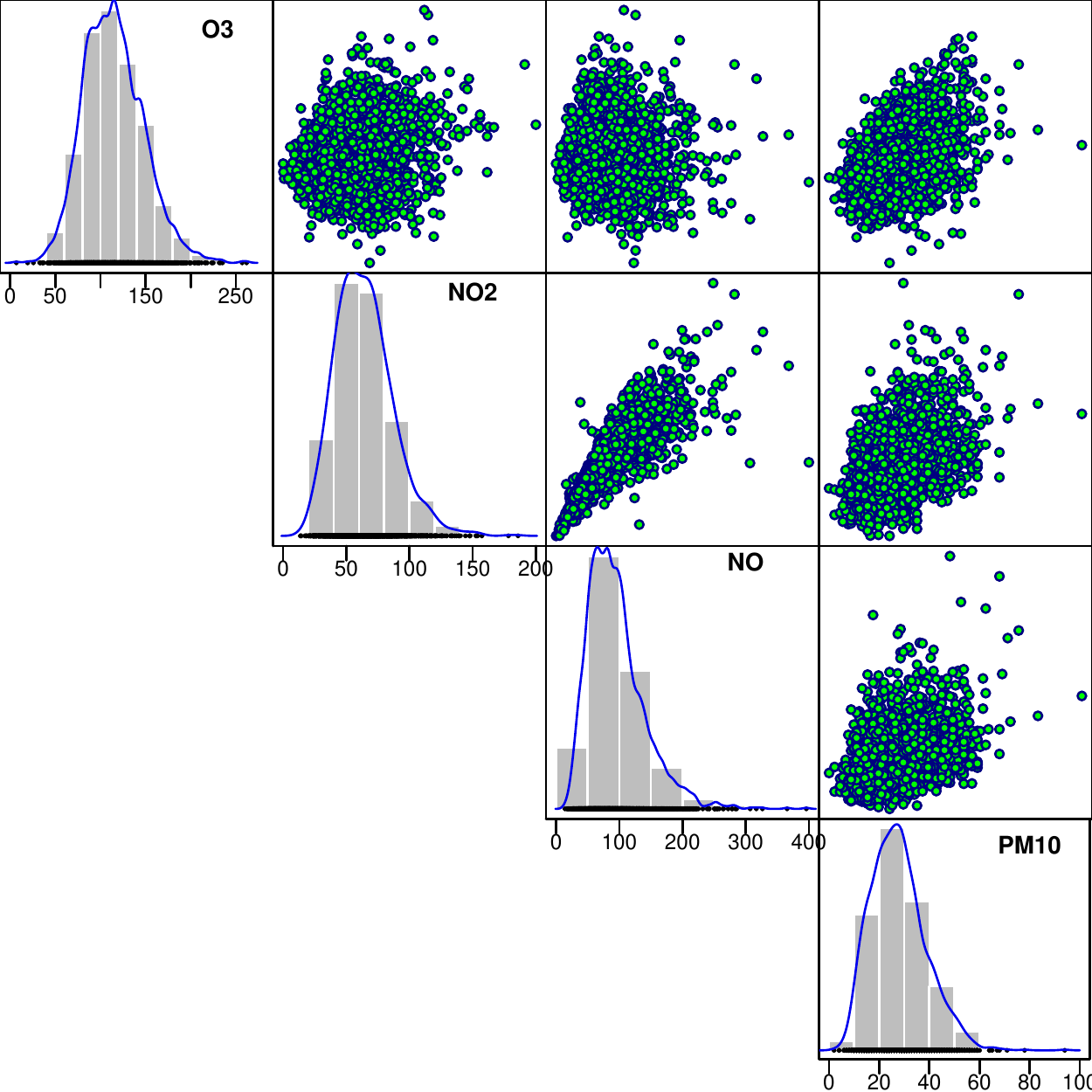}
\includegraphics[width=0.6\textwidth, page=2]{images/scatter_plots.pdf}
\vspace{7pt}
\caption{Histograms and pairwise plots of pollutants levels in $\mu$g/m$^3$. Upper and lower panels concern the summer and winter
data, respectively.}
\label{fig:scatter_plots}
\end{figure}
Figure \ref{fig:scatter_plots} displays in the top and bottom parts the pairwise scatter plot for the summer and winter datasets, respectively, together with histograms of the individual pollutants levels. SO$_2$ has been removed from the summer dataset and O$_3$ winter dataset, because they seem independent from the other pollutants. In each dataset
the pollutants seem to be highly dependent and this is especially true for NO$_2$, NO and PM$_{10}$.
In summer, O$_ 3$ is moderately dependent to NO$_2$ and PM$_{10}$.
Finally, we see that in the winter season NO$_2$, NO and PM$_{10}$ reach much higher pollution concentrations than in summer.
\begin{table}[t!]
\begin{center}
 \begin{tabular}{clclccccc}
\toprule
Scenario & Season & $n$ &   & \multicolumn{4}{c}{Pollutant} & JEEP\\
\cmidrule(lr){5-8}
&Summer & 1655 && O$_3$ & NO$_2$ & NO & PM$_{10}$ & \\
\mycmidrule(lr){1-9}
1 && & Threshold  & 120 & 200 & 200 & 50 & 0\\
\cdashlinelr{4-8}
& && MEEP & 40.181 & 0 & 2.961 & 3.444 & \\
\cmidrule(lr){1-9}
2 && & Threshold & 180 & 200 & 200 & 50 & 0\\
\cdashlinelr{4-8}
& && MEEP & 3.263 & 0 & 2.961 & 3.444 & \\
\cmidrule(lr){1-9}
3 && & Threshold & 240 & 400 & 400 & 150 & 0\\
\cdashlinelr{4-8}
& && MEEP & 0 & 0 & 0 & 0 & \\
\mycmidrule(lr){1-9}
%\midrule
& Season & $n$ & & \multicolumn{4}{c}{Pollutant} & \\
\cmidrule(lr){5-8}
& Winter & 1713 & & SO$_2$ & NO$_2$ & NO & PM$_{10}$ & \\
\mycmidrule(lr){1-9}
4 &&& Threshold & 125 & 200 & 200 & 50 & 0.0584\\
\cdashlinelr{4-8}
& &&MEEP  & 0.350 & 1.459 & 78.167 & 58.785 & \\
\cmidrule(lr){1-9}
5 && &Threshold & 125 & 200 & 400 & 150 & 0.0584\\
\cdashlinelr{4-8}
& &&MEEP & 0.350 & 1.459 & 32.399 & 1.926 & \\
\cmidrule(lr){1-9}
6 & &&Threshold & 125 & 200 & 800 & 150 & 0.0584\\
\cdashlinelr{4-8}
& &&MEEP & 0.350 & 1.459 & 3.853 & 1.926 & \\
\cmidrule(lr){1-9}
7 & && Threshold & 125 & 400 & 800 & 150 & 0\\
\cdashlinelr{4-8}
& && MEEP & 0.350 & 0 & 3.853 & 1.926 & \\
\bottomrule
\end{tabular}
\caption{Marginal and joint empirical probability of threshold exceedances for different combinations of thresholds.}\label{tab:scenarios}
\end{center}
\end{table}

Table \ref{tab:scenarios} reports 7 possible combinations of the thresholds listed in Table \ref{tab:thresholds}. The first 3 scenarios concern the summer season
(with $n=1655$ observations) and the last four (with $n=1713$ observations) the winter season. For each scenario the Joint Empirical
Exceedance Probability (JEEP) and for each individual pollutant of certain scenario the Marginal Empirical
Exceedance Probability (MEEP) are reported (in percentage format).
In summer, with O$_3$ approximately 40\% of observations  exceed the Limit threshold ($120$ $\mu$g/m$^3$).
Similarly, in winter, with NO approximately 78\% and 32\% of the observations exceed the Limit and Alert thresholds (200 and 400 $\mu$g/m$^3$).
Also, with PM approximately 59\% of the observations  exceed the Limit threshold (50 $\mu$g/m$^3$). Therefore, these thresholds can not be considered extreme values.
On the contrary, all the other thresholds can be considered extreme values, since that only a few observations exceed such pollutants concentrations.
In particular, with NO we found that an extreme concentration is 800 $\mu$g/m$^3$, i.e. 2 times the Alert threshold.  We estimate the probability of joint
exceedances.

We use our approach to estimate the probabilities of joint exceedances that are concerning extreme thresholds.
For this purpose, in the first place we estimate for each pollutant the probability, say $p_0$, of being below an extreme threshold, say $y$. 
We do this using the piecing together approach \citep[Chapter 2.7]{fahure10}. In short, we find a high-threshold, say $s$, with which we can use the survival function of the univariate GPD to approximate the exceeding probability of $y$, given that the latter is greater than $s$. We multiply an estimate of such a probability for the probability of exceeding $s$ (which we estimate by the empirical survival function) obtaining an estimate for the unconditional probability of exceeding $y$ (which allows to an estimate of the unconditional probability of being below than $y$).
\begin{table}[t!]
\begin{center}
 \begin{tabular}{ccccccccc}
\toprule
Season & Pollutant & Threshold & \multicolumn{5}{c}{GPD Estimates}  & \\
\cmidrule(lr){4-9}
& & & $s$ & NE & EEP  & $\hat{\sigma}$ & $\hat{\xi}$ & $\hat{p}_0$\\
\midrule
Summer & O$_3$ & 180 & 150 & 226 & 13.656 & 21.860 & -0.114  & 96.930\\
&&&& & (0.844) & (1.936) & (0.058) &  (0.804)\\
\cdashlinelr{3-9}
&& 240&& & &  &   & 99.947\\
&&&& &  &  & & (0.091)\\
\cmidrule(lr){2-9}
 & NO$_2$ & 200 & 96 & 136 &  8.218 & 14.870 & 0.067  & 99.973\\
&&&& & (0.675) & (1.862) & (0.091) &  (0.048)\\
\cdashlinelr{3-9}
&& 400&& & &  &   & 99.999\\
&&&& &  &  & & (0.0001)\\
\cmidrule(lr){2-9}
& NO & 200 & 150 & 176 &  10.634 & 36.401 & 0.047  & 97.191\\
&&&& & (0.758) & (3.970) & (0.079) &  (0.822)\\
\cdashlinelr{3-9}
&& 400&& & &  &   & 99.972\\
&&&& &  &  & & (0.048)\\
\cmidrule(lr){2-9}
& PM$_{10}$ & 50 & 47 & 89 &  5.378 & 6.387 & 0.041  & 96.623\\
&&&& & (0.554) & (0.977) & (0.110) &  (2.391)\\
\cdashlinelr{3-9}
&& 150&& & &  &   & 99.999\\
&&&& &  &  & & (0.0002)\\
\midrule
Winter & SO$_2$ & 125 & 40 & 233 & 13.602 & 24.131 & -0.026  & 99.662\\
&&&& & (0.843) & (2.206) & (0.064) &  (0.258)\\
\cmidrule(lr){2-9}
 & NO$_2$ & 200 & 130 & 240 &  14.011 & 24.376 & 0.192  & 98.576\\
&&&& & & (0.853) & (0.077) &  (0.517)\\
\cdashlinelr{3-9}
&& 400&& & &  &   &99.963\\
&&&& &  &  & & (0.046)\\
\cmidrule(lr){2-9}
 & NO & 800 & 600 & 206 &  12.026 & 195.61 & -0.029  & 95.741\\
&&&& & (0.800) & (18.991) & (0.068) &  (0.978)\\
\cmidrule(lr){2-9}
& PM$_{10}$ & 150 & 100 & 238 &  13.894 & 28.222 & -0.023  & 97.722\\
&&&& & (0.850) & (2.558) & (0.063) &  (0.684)\\
\bottomrule
\end{tabular}
\caption{Estimate of the GPD parameters and the unconditional probability to be below the individual extreme thresholds.}\label{tab:marginal_estimation}
\end{center}
\end{table}
We select the threshold $s$ through the commonly used exploratory graphical methods that are described in \citet[Chapters 4.3.1, 4.3.4]{coles2001}.
The GPD parameters are estimated using the maximum likelihood method (\citet[Chapter 4.3.2]{coles2001}).
Estimates of the variances for the GPD parameters estimates are are obtained using the asymptotic variance, see \citet{smith1984}.
An estimate of the variance for the estimate of the probability $p_0$ is obtained using the delta method (\citet[Chapter 3]{van2000}).
\begin{table}[t!]
\begin{center}
 \begin{tabular}{ccccccc}
\toprule
Scenario & (O$_3$, NO$_2$, NO, PM$_{10}$) & $t_0$ & $\hat{p}_n$ & $\hat{q}_n$ & LB-CI & UB-CI\\
\midrule
2 & (180, 200, 200, 50) & 4.6606 & 0.6042 & 0.0282 & 0.0135  & 0.0517\\
 & ( $\quad$, 200, 200, 50) & 3.3894& 0.8459 & 0.0287 & 0.0157 &  0.0480 \\
\rowcolor{lightgray} & (180, $\quad$ , 200, 50) & 44.561 & 0.4834 & 0.2154 & 0.0931  & 0.4234\\
 & (180, 200, $\quad$ , 50) & 4.7789 & 0.5438 & 0.0260 & 0.0119  & 0.0492\\
 & (180, 200, 200, $\quad$) & 3.3901 & 0.7855 & 0.0266 & 0.0142  & 0.0454\\
 & (180, 200, $\quad$ , $\quad$) & 4.0065 & 0.6647 & 0.0266 & 0.0133  & 0.0475\\
\rowcolor{lightgray} & (180, $\quad$ , 200, $\quad$) & 39.4044 & 0.7251 & 0.2857 & 0.1478  & 0.4977\\
\rowcolor{lightgray} & (180, $\quad$ , $\quad$ , 50) & 26.1881  & 3.6858 &  0.9652 &  0.7413 & 1.2334\\
 & ($\quad$, 200, 200, $\quad$ ) & 2.9383 & 0.9063 & 0.0266  & 0.0149  & 0.0438\\
 & ( $\quad$ , 200, $\quad$ , 50) & 3.3901 & 0.7855 & 0.0266 & 0.0142  & 0.0454\\
\rowcolor{lightgray} & ( $\quad$ , $\quad$ , 200, 50) & 37.2618 & 1.5710 & 0.5854 & 0.3833  & 0.8546\\
\cdashlinelr{1-7}
3 & (240, 400, 400, 150) & 0.3435 &0 & 0 & 0 & 0.0008\\
\midrule
Scenario & (SO$_2$, NO$_2$, NO, PM$_{10}$) & $t_0$ & $\hat{p}_n$ & $\hat{q}_n$ & LB-CI & UB-CI\\
\midrule
5 & (125, 200, 400, 150) & 76.2093 & 0.1751 & 0.1335 & 0.0275 & 0.3894\\
\cdashlinelr{1-7}
6 & (125, 200, 800, 150) & 7.6186 & 1.5178 & 0.1156 & 0.0757 & 0.1688 \\
\rowcolor{lightgray} & ( $\quad$ , 200, 800, 150) & 51.3463 & 1.3427 & 0.6894 & 0.4380 & 1.0310 \\
 & (125, $\quad$ , 800, 150) & 24.0850 & 0.7589 & 0.1828& 0.0975 & 0.3117 \\
 & (125, 200, $\quad$ , 150) & 22.0570 & 0.5838 & 0.1288& 0.0618 & 0.2362 \\
 & (125, 200, 800, $\quad$ ) & 38.6023 & 0.3503 & 0.1352& 0.0497 & 0.2937 \\
 & (125, 200, $\quad$ , $\quad$ ) & 7.3745 & 1.9848 & 0.1464 & 0.1016 & 0.2037 \\
\rowcolor{lightgray} & (125, $\quad$ , 800, $\quad$ ) & 4.2888 & 7.8809 & 0.3380 & 0.2852 & 0.3971 \\
 & (125, $\quad$ , $\quad$ , 150) & 7.0823 & 2.7437 & 0.1943& 0.1433 & 0.2572 \\
\rowcolor{lightgray}& ( $\quad$ , 200, 800, $\quad$ ) & 4.2635 & 33.3917 & 1.4236 & 1.3285 & 1.5213 \\
\rowcolor{lightgray}& ( $\quad$ , 200, $\quad$ , 150) & 14.5064 & 5.5458 & 0.8045& 0.6542 & 0.9773 \\
\rowcolor{lightgray} & ( $\quad$ , $\quad$ , 800, 150) & 22.4274 & 5.6042 & 1.2569 & 1.0233 & 1.5253 \\
\cdashlinelr{1-7}
7 & (125, 400, 800, 150) & 4.8869 & 0.1751 & 0.0086 & 0.0018 & 0.0250\\
\bottomrule
\end{tabular}
\caption{Probability estimates of joint exceedances of  extreme thresholds. Results are given in percentage format.}\label{tab:est_joint_exc_prob}
\end{center}
\end{table}
Note that each $p_0$ acts as a component of $\bfx_0$ in \eqref{eq:exc_prob_unif}.
Table \ref{tab:marginal_estimation} shows the estimation results. Specifically, the column named Threshold reports the extreme thresholds of the scenarios in Table \ref{tab:scenarios} with small percentages of exceedances. $s$ indicates the threshold used for estimating the univariate GPD parameters. NE is the number of exccedances of $s$ and EEP is the relative empirical exceedance probability (in percentage format). The values $\hat{\sigma}$ and $\hat{\xi}$ are the estimates of the scale and shape parameters of the univariate GPD, see equation \eqref{eq:uni_GPD}. The value $\hat{p}_0$ is an estimate of the unconditional probability (in percentage) to be below the extreme threshold reported in the third column (from the left). The standard errors are reported in parentheses. The variance of EEP is obtained using the fact that NE follows a Binomial distribution with unknown exceedance probability (estimated by EEP) and sample size $n$ (see Table \ref{tab:scenarios}).

Once  the extreme thresholds were transformed to values in (0,1), we apply the estimation method  introduced in Section  \ref{sec:estimaton of exceedance probability} for estimating the probabilities of their exceedances on the copula level, using the empirical copula of the original data. Estimation of joint exceedance probabilities on the copula level can be based on the transformation of the margins if their df are known. It was, however, shown in \citet{buecher2012} that it is more efficient if the additional knowledge of the margins is ignored and estimators are based on ranks, i.e., if the empirical copula of the initial data is used. 

Table \ref{tab:est_joint_exc_prob} reports, in the column labeled by $\hat{q}_n$, the estimates of exceedances probabilities (in percentage) for the scenarios listed in Table \ref{tab:scenarios}. The lower and upper bounds of their 95\% confidence interval are reported in the columns LB-CI and UB-CI, respectively. Due to the reasonably large underlying sample size, Clopper-Pearson is used for the confidence bounds. The factors $t_0$ are given in percentage format as well.
Furthermore, estimates for some combinations of three and two extreme thresholds are also reported. The lines highlighted in grey concern the higher estimated probabilities.
Scenarios 1 and 4 are not considered because the thresholds for O$_3$ (in summer) and NO$_2$ and NO (in winter) are not extreme. However, upper bounds for those probabilities are given by the results listed in the second and twentieth line. 
Note that in scenario 3 we found a critical value $\bfu_0$ in \eqref{eq:x0_u0_relation} such that $\hat{p}_n=0$. 
By defining the new critical value $\tilde{\bfx}_0\cdot 100 \%=(99.947,99.947,99.947,99.947)$, which uses the only the smallest $\hat{p}_0$, we were able to check condition \eqref{eqn:characteristic property of GPC}. Indeed, we found that it holds. Thus, although the exceedance probability estimate is $\hat{q}_n=0$ we computed the upper bound of its 95\% confidence interval.
Some interpretations are as follows.
In summer, we expect  that the Information and Limit thresholds for O$_3$ and PM$_{10}$, respectively, are simultaneously exceeded on average approximately
between two and four times every three years (with the latter that also means once per year).
In winter, we expect  that the Limit, double the Alert and the Alert thresholds for NO$_2$, NO and PM$_{10}$, respectively, are simultaneously exceeded on average approximately between once every two years and once per year. Finally, we expect that double the Alert and the Alert thresholds for NO$_2$ and NO, respectively, are simultaneously exceeded on average approximately between once and twice per year.
Although joint thresholds exceedances do not happen often, they should not happen at all since the involved thresholds mean indeed very extreme pollution concentrations.

\section*{Acknowledgements}

The authors thank the Italian government agency ARPA for having made available the air pollution data. The work was completed while the second author was visiting Wuerzburg University. He found a nice and stimulating environment. He thanks colleagues and staff for their hospitality which has been greatly appreciated.

\bibliographystyle{chicago}
\bibliography{evt}

\end{document}